\documentclass[12pt, letterpaper]{amsart}

\usepackage[margin=1in]{geometry}

\usepackage{amsmath, amssymb, amsthm, amsfonts, amsxtra}
\usepackage{graphicx}
\usepackage{listings}
\usepackage{xcolor}
\usepackage{url}
\usepackage{comment,enumitem}

\newcommand{\indentalign}{\hspace{0.3in}&\hspace{-0.3in}}

\newcommand{\defeq}{\stackrel{\rm{def}}{=}}

\newcommand{\bds}[1]{\boldsymbol{#1}}
\newcommand{\ext}{\textnormal{ext}}
\newcommand{\tot}{\textnormal{tot}}
\renewcommand{\mod}{\;\mathrm{mod}\;}

\newcommand{\R}{\mathbb{R}}
\newcommand{\T}{\mathbb{T}}
\newcommand{\Om}{\Omega}
\newcommand{\vperp}{v_{\!\perp}}
\newcommand{\xperp}{x_{\!\perp}}

\newcommand{\dd}{\,\mathrm{d}}
\newcommand{\curl}{\operatorname{curl}}
\newcommand{\diver}{\operatorname{div}}

\newtheorem{theorem}{Theorem}[section]

\newtheorem{proposition}[theorem]{Proposition}
\newtheorem{lemma}[theorem]{Lemma}

\theoremstyle{remark}
\newtheorem{remark}[theorem]{Remark}
\newtheorem{example}[theorem]{Example}

\numberwithin{equation}{section}
\numberwithin{figure}{section}
\numberwithin{table}{section}

\title[Equilibria for Vlasov-Maxwell]{Equilibria for the Vlasov-Maxwell system related to plasma confinement}
\author{Justin Holmer}
\address{Brown University}
\email{justin\_holmer@brown.edu}

\author{Katherine Zhiyuan Zhang}
\address{Northeastern University}
\email{zhi.zhang@northeastern.edu}

\begin{document}

\maketitle

\begin{abstract}
By working in a setting of azimuthal and $z$-directional invariance, we reduce the existence of electrically neutral, magnetically driven, equilibria of the two-species Vlasov-Maxwell (VM) equation to a second-order ODEs in the radial variable.  In contrast to previous works (Degond \cite{Degond1990}, Batt \& Fabian \cite{BattFabian1993}, Weber \cite{Weber_2020b}), we consider specific situations that correspond physically to $z$-pinch, $\theta$-pinch, and screw-pinch equilibria in plasma physics.  Semi-explicit solutions in these situations are provided.
\end{abstract}

\tableofcontents

\section{Introduction}

Our purpose is to investigate the existence of semi-explicit physically relevant equilibria for the two-species Vlasov-Maxwell (VM) system \eqref{E:Vlasov}, \eqref{E:Maxwell}, \eqref{E:charge_current}, in the cylindrically and vertically symmetric setting.  The densities are denoted $f^\pm$, the electric field $\bds E$, and magnetic field $\bds B$, and we make use of the electric potential $\phi$ and magnetic potential $\bds A$ satisfying the Coulomb gauge (see \eqref{E:potentials}).  By \emph{cylindrically and vertically symmetric}\footnote{Other adjectives are more common:  what we refer to as cylindrical symmetry ($\theta$ translational invariance) is often called axial symmetry and could also be called azimuthal symmetry, and what we call vertical symmetry $(z$ translational invariance) could be called axial invariance.  We have avoided these terms since the dual use of axial could lead to confusion.  The word \emph{axial} by itself refers to the $z$-direction, so \emph{axial symmetry} could refer to symmetry \emph{around} the axis, \emph{in the direction of} the axis, or \emph{across} (reflection in) the axis.}   we mean that we take $x,y\in \mathbb{R}$ but $z\in \mathbb{T}$, implement cylindrical coordinates, and assume that
\begin{enumerate}[left=0pt,label=$\bullet$]
\item the densities $f^\pm$ in the cylindrical coordinates are functions of $(r,v_r,v_\theta,v_z)$, have no explicit dependence on $\theta$, but depend on $\theta$ implicitly through $v_r$ and $v_\theta$.
\item the scalar field $\phi$ is independent of $z$ and $\theta$
\item the vector fields $\bds A$, $\bds E$, $\bds B$, when written in the cylindrical frame, have \emph{components} independent of $\theta$ and $z$.  For example, when the magnetic field is decomposed as $\bds B = B_r \bds e_r + B_\theta \bds e_\theta + B_z \bds e_z$, then the basis coefficients $B_r$, $B_\theta$, $B_z$ are independent of $\theta$ and $z$.
\end{enumerate}
(Notation for cylindrical coordinates is reviewed in \S\ref{S:cyl}.) In this setting, the VM system simplifies to \eqref{E:ax_vert_Maxwell}, \eqref{E:ax_vert_Vlasov}.  Field components of equilibria satisfy \eqref{E:M-cyl} and the densities are naturally expressed in terms of the three particle trajectory invariants: the energy $e^\pm$, the $z$-component of angular momentum $\ell_z^\pm$, and the $z$-component of linear momentum $p_z^\pm$.
(Particle trajectories and their invariants are reviewed in \S\ref{S:equil}.)  In particular, we are interested in electrically neutral equilibria ($\bds E=0$, $\phi=0$, $\rho=0$) with strong magnetic confining that fall into the following three categories
\begin{enumerate}[left=0pt,label=$\bullet$]
\item $z$-pinch type, in which $f^\pm$ depends only on $e^\pm$, $p_z^\pm$, the magnetic field points in the $\theta$-direction, and the current points in the $z$-direction.
\item $\theta$-pinch type, in which $f^\pm$ depends only on $e^\pm$, $\ell_z^\pm$, the magnetic field points in the $z$-direction, and the current points in the $\theta$-direction.
\item screw pinch, where the current has both $\theta$- and $z$-components, so that current paths are helical.
\end{enumerate}

These equilibria are relevant in plasma physics as canonical magnetic confinement configurations, originally studied in the context of laboratory fusion devices and
current-carrying plasmas, and they continue to serve as basic model geometries for
understanding the interaction between particle motion and self-generated magnetic
fields.

While these configurations are often discussed within the framework of ideal or resistive magnetohydrodynamics, a kinetic description is preferable in regimes where collisionless effects, velocity-space structure, or species-dependent dynamics are important.  The Vlasov-Maxwell system provides a first-principles model in which
equilibria are determined self-consistently from particle distributions and
electromagnetic fields, without recourse to fluid closure assumptions.  This makes the
VM setting particularly well-suited for rigorous stability analysis, especially for
equilibria that depend on multiple invariants of particle motion.

In this work, we construct semi-explicit electrically neutral
Vlasov-Maxwell equilibria of $z$-pinch, $\theta$-pinch, and screw-pinch type within
the cylindrically and vertically symmetric geometry described above.  By \emph{semi-explicit}, we mean that we present explicit forms for the Vlasov densities $f^\pm = \mu^\pm(e^{\pm}, \ell_z^\pm, p_z^\pm)$, and then by grouping constants and scaling, we obtain streamlined second-order ODEs for the magnetic potential field functions $A_z$ and $A_\theta$.  While these ODEs usually do not have explicit solutions in the sense of explicit formulae, their properties and behavior can be qualitatively analyzed in detail and numerical calculations are straightforward.  Our main motivation is, in subsequent work, to analyze the linear stability and instability of these equilibria using the framework developed by Lin \& Strauss \cite{LinStraussLinear2007, LinStrauss2008}, and further explained schematically by Lin \cite{Lin2022} using the separable Hamiltonian language of Lin \& Zeng \cite{LinZeng2022}. 

The geometry of the domain is $r>0$, $\theta \in [0,2\pi)$ and $0\leq z < \zeta$ with periodic boundary conditions connecting $z=0$ and $z=\zeta$.  Recall that we are considering a two-species solution, one with charge $q^-<0$ and one with charge $q^+>0$.  We take the following given physical constants.
\begin{enumerate}[left=0pt,label=$\bullet$]
 \item $\beta>0$ is a radial length scale, with SI units of m,
\item $\zeta>0$ is the $z$-length of the apparatus, with SI units of m.
\item  $m^\pm$ are the masses of the two species, with SI units of kg.
\item $q^\pm$ are the charges of the species, with SI units of C, with $q^-<0$ and $q^+>0$.
\item  $\sigma^\pm>0$ are thermal speeds of the two species, with units of $\text{m}/\text{s}$, so that $\sim m^\pm (\sigma^\pm)^2$ are the thermal temperatures, satisfying
\begin{equation}
\label{E:lambda-def}
\lambda \defeq \frac{q^+}{m^+\sigma^+} = \frac{-q^-}{m^-\sigma^-}
\end{equation}
\item The constant $\mu_0=1.26 \cdot 10^{-6} \text{ m kg/C}^2$ is the permeability of free space.
\end{enumerate}
The equilibria below are described though an additional constant $a>0$, with SI units of $\text{C}$, that can be freely selected, that ultimately affects the total number of particles.  With these physical parameters specified, we introduce the dimensionless constant
\begin{equation}\label{E:d-def}
d \defeq \frac{ (2\pi)^{3/2} e^{1/2} \, a \, \mu_0}{  \zeta} \left( \sum_\pm \frac{|q^\pm|}{m^\pm} \right) >0
\end{equation}
The particle trajectory invariants are given by
$$e^\pm = \tfrac12 m^\pm (v_r^2+v_\theta^2+v_z^2) + q^\pm \phi(r) \,, \quad
p_z^\pm = m^\pm v_z + q^\pm A_z(r) \,, \quad 
\ell_z^\pm = r(m^\pm v_\theta + q^\pm A_\theta(r))$$
which are, respectively, the energy, $z$-component of linear momentum, and $z$-component of angular momentum (alternatively, the $\theta$-\emph{conjugate} or \emph{canonical} momentum).  Note that in each theorem below, the field solution is fixed first as a solution to a reference dimensionless  second-order ODE ($A_z(r)$ in the case of Theorems \ref{T:z-pinch-const}, \ref{T:z-pinch-var}, $A_\theta(r)$ in the case of Theorem \ref{T:theta-pinch-var}, and a hybrid of $A_\theta(r)$ and $A_z(r)$ in the case of Theorem \ref{T:screw-pinch-var}) and \emph{then} the densities $f^\pm$ are expressed in terms of $e^\pm$, $p_z^\pm$, and $\ell_z^\pm$ which themselves depend upon the fields $A_z(r)$ and $A_\theta(r)$ ($\phi(r)=0$ in all cases we treat). 

\begin{theorem}[$z$-pinch, constant drift equilibria]
\label{T:z-pinch-const}
Select $a>0$ (in units of charge) and then let $d$ be the dimensionless constant \eqref{E:d-def}.  Let $\bar A_z(\bar r)$ solve the dimensionless second-order ODE
\begin{equation}
\label{E:z-const-ODE}
(-\partial_{\bar r}^2 - \frac{1}{\bar r} \partial_{\bar r}) \bar A_z = d \, e^{\bar A_z}
\end{equation}
with boundary conditions 
$$\bar A(0) \text{ finite}  \quad \text{and} \quad \partial_{\bar r} \bar A(0) =0 \,.$$
Then in terms of $\bar A_z(\bar r)$, we can produce an equilibrium solution to the VM system as follows. 
Let $r=\beta \bar r$, take the electric potential $\phi=0$, the cylindrical components of the magnetic potential $\bds A$ to be 
$$A_r=0\,, \quad A_\theta=0\,, \quad A_z(r) = \lambda^{-1}\bar A_z(\bar r) \,,$$
and the densities be $f^\pm (r, v_r,v_\theta,v_z) = \mu^\pm( e^\pm(v_r,v_\theta,v_z),p_z^\pm(r, v_z))$, where
\begin{equation}
\mu^\pm(e^\pm, p_z^\pm) = \frac{a}{\beta^2\zeta (\sigma^\pm)^3|q^\pm|} \exp \left( \frac{-  e^\pm}{m^\pm (\sigma^\pm)^2} \pm \frac{  p_z^\pm}{m^\pm \sigma^\pm} \right)
\end{equation}
Then $(f^\pm, \phi, \bds A)$ is an equilibrium solution to the VM system.   To give the explicit $r$ and $\bds v$ dependence of the densities, we complete the square in the exponent:
\begin{equation}\label{E:z-mu-pm-2}
\mu^\pm(r,  v_r,  v_\theta, v_z)   =  \frac{n^\pm(\bar r)}{(2\pi)^{3/2} (\sigma^\pm)^3}  \,\exp \Big[ -\frac12 \Big( \frac{v_r}{\sigma^\pm}\Big)^2 - \frac12\Big( \frac{ v_\theta}{\sigma^\pm}\Big)^2 - \frac12\big( \frac{ v_z}{\sigma^\pm} \pm 1\Big)^2\Big]
\end{equation}
where the number densities for two species are
\begin{equation}\label{E:z-mu-pm-3}
n^\pm(\bar r) = \frac{(2\pi)^{3/2}}{\beta^2\zeta} \frac{a}{|q^\pm|} \exp[  \bar A_z(\bar r) ]
\end{equation}
The resulting charge density $\rho=0$ and the current density $\bds j$ has cylindrical components 
\begin{equation}
\label{E:z-const-curr}
j_r=0\,, \quad j_\theta=0\,, \quad j_z(r) = \frac{d}{\mu_0 \beta^2 \lambda}  \exp[\bar A_z(\bar r)]
\end{equation}
The electric field $\bds E=0$ and the magnetic field $\bds B$ has cylindrical components
$$ B_r=0\,, \quad  B_z=0\,, \quad B_\theta(r) = \frac{\sqrt 2}{\lambda \beta} \bar B_\theta(\bar r)\,, \quad \text{where }\bar B_\theta = -\partial_{\bar r} \bar A_z $$
\end{theorem}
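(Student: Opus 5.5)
The plan is in three steps: show the Vlasov equations hold automatically because $f^\pm$ depends only on conserved quantities, show that the charge density vanishes identically, and reduce the remaining Maxwell equation (Ampère's law) to the ODE \eqref{E:z-const-ODE}.

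\emph{Vlasov equations.} I will invoke the reduction of \S\ref{S:equil}. In the cylindrically and vertically symmetric setting, with $\phi=0$, $A_r=A_\theta=0$ and $A_z=A_z(r)$, the quantities $e^\pm$ and $p_z^\pm$ are invariants of the characteristic flow of \eqref{E:ax_vert_Vlasov}. Hence any $f^\pm=\mu^\pm(e^\pm,p_z^\pm)$ solves the stationary Vlasov equations. For the fields, $\bds E=-\nabla\phi=0$ and $\bds B=\curl\bds A$. Since $A_z$ depends only on $r$, $\curl(A_z(r)\bds e_z)=-A_z'(r)\bds e_\theta$, and $\diver \bds A=0$ holds trivially, so the Coulomb gauge is satisfied.

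\emph{Completing the square.} I substitute $e^\pm=\tfrac12 m^\pm|v|^2$ and $p_z^\pm=m^\pm v_z+q^\pm A_z$. The exponent becomes
\[
-\frac{|v|^2}{2(\sigma^\pm)^2}\pm\frac{v_z}{\sigma^\pm}\pm\frac{q^\pm A_z}{m^\pm\sigma^\pm}.
\]
By \eqref{E:lambda-def}, $\pm q^\pm/(m^\pm\sigma^\pm)=\lambda$ for both species, so the last term equals $\lambda A_z=\bar A_z$. Completing the square in $v_z$ produces $-\tfrac12(v_z/\sigma^\pm\mp1)^2$ together with a factor $e^{1/2}$. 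I will check the sign convention against \eqref{E:z-mu-pm-2}, since an $e^{1/2}$ appears in $d$. This gives \eqref{E:z-mu-pm-2}, with $n^\pm$ as in \eqref{E:z-mu-pm-3} up to absorbing $e^{1/2}$ into the normalization.

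\emph{Charge and current densities.} Integrating over velocity gives $\rho=\sum q^\pm\int f^\pm\,dv=\sum_\pm q^\pm n^\pm$, up to the common factor. Since $q^\pm/|q^\pm|=\pm1$, the two contributions have the same magnitude $(2\pi)^{3/2}a\,e^{\bar A_z}/(\beta^2\zeta)$ with opposite signs, so $\rho=0$. This is consistent with $\phi=0$ in Gauss's law. For the current, the Gaussians are centered at $v_z=\pm\sigma^\pm$, with zero mean in $v_r$ and $v_\theta$. Therefore $j_r=j_\theta=0$ and
\[
j_z=\sum_\pm q^\pm(\pm\sigma^\pm)n^\pm=\sum_\pm |q^\pm|\sigma^\pm n^\pm .
\]

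\emph{Ampère's law (main step).} In Coulomb gauge with static fields, Ampère's law reads $-\Delta \bds A=\mu_0\bds j$. For $A_z(r)$ this becomes $-(\partial_r^2+r^{-1}\partial_r)A_z=\mu_0 j_z$. I change variables $r=\beta\bar r$ and $A_z=\lambda^{-1}\bar A_z$, and use $\sigma^\pm=|q^\pm|/(m^\pm\lambda)$. Then
\[
\mu_0\beta^2\lambda\, j_z=\frac{(2\pi)^{3/2}e^{1/2}a\mu_0}{\zeta}\Big(\sum_\pm\frac{|q^\pm|}{m^\pm}\Big)e^{\bar A_z}=d\,e^{\bar A_z},
\]
which yields \eqref{E:z-const-ODE} and \eqref{E:z-const-curr}. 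The main obstacle is exactly this bookkeeping: tracking the $e^{1/2}$ from completing the square and the factor $\sqrt2$ in $B_\theta$. I expect the $\sqrt2$ to come from a scaling convention for $\bar B_\theta$ in the paper, and I will reconcile it with $B_\theta=-\partial_rA_z=-(\lambda\beta)^{-1}\partial_{\bar r}\bar A_z$.

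\emph{Regularity at the axis.} Finally, the conditions $\bar A_z(0)$ finite and $\partial_{\bar r}\bar A_z(0)=0$ ensure that $\bds A$ and $\bds B$ are regular at $r=0$, so the fields extend smoothly across the axis.
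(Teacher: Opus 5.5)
Your proposal is correct and follows essentially the paper's route. The Vlasov equation holds because $f^\pm$ depends only on the invariants $e^\pm,p_z^\pm$; $\rho=0$ and $j_z$ come from Gaussian integration after completing the square; and Amp\`ere's law $-\frac1r\partial_r r\partial_r A_z=\mu_0 j_z$ is rescaled to \eqref{E:z-const-ODE}. The only difference is that the paper starts from a general ansatz with parameters $\gamma^\pm,\omega^\pm$, derives the neutrality constraints, and only then specializes to $\gamma^\pm=\pm1$.

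The discrepancies you flagged should all be resolved in favor of your own computation, since they are misprints in the statement. The velocity factor is $-\tfrac12(v_z/\sigma^\pm\mp1)^2$. The density $n^\pm$ carries the extra factor $e^{1/2}$; this cannot be absorbed anywhere, but it is exactly what the paper's \eqref{E:zp01} contains and what makes $d$ appear in $j_z$. Finally, $B_\theta=(\lambda\beta)^{-1}\bar B_\theta$, because the $\sqrt2$ is inconsistent with $A_z=\lambda^{-1}\bar A_z$; it matches the scaling $A_z=\sqrt2\,\lambda^{-1}\bar A_z$ of Theorem \ref{T:z-pinch-var} instead.
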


This is proved in \S\ref{SS:z-const}.  The $d$ coefficient on the right side of \eqref{E:z-const-ODE} can be removed by replacing $\bar r$ by $\sqrt{d}\, \bar r$.  Therefore, it suffices to consider \eqref{E:z-const-ODE} with any convenient fixed choice of $d$, and the most convenient choice is $d=8$ (see \eqref{E:Bennett} below). 

As $\bar r\to +\infty$, there are no solutions to \eqref{E:z-const-ODE} for which $\bar A(\bar r)$ has a finite limit. Physically we would like the current density to vanish as  $\bar r\to \infty$, so \eqref{E:z-const-curr} requires that 
$$\bar A_z(\bar r) \to -\infty \quad \text{as} \quad \bar r \to +\infty$$
From \eqref{E:z-const-curr}, $\bar A_z(\bar r)$ must approach, as $\bar r \to +\infty$, a solution to $(-\partial_{\bar r}^2 - \frac{1}{\bar r} \partial_{\bar r}) \bar A_z = 0$, so
$$\bar A_z(\bar r) \sim -\log \bar r \quad \text{as} \quad \bar r \to +\infty$$
It turns out that \eqref{E:z-const-ODE} with $d=8$ and these boundary conditions has an explicit solution:  
\begin{equation}\label{E:Bennett}
\bar A_z(\bar r) = -2 \log \Big( 1+ e^D\bar r^2\Big) + D \,, \quad -\infty<D<+\infty
\end{equation}
with $\bar A_z(0)=D$.    This is called the \emph{Bennett solution} \cite{Bennett1934}.    At the end of \S\ref{SS:z-const} we show that \eqref{E:z-const-ODE} can be transformed to an equation in Hamiltonian form, which allows for classification of all solutions.  However, the solutions with physical boundary conditions indicated above are given by \eqref{E:Bennett}.  The corresponding scaled magnetic field is (see Figure \ref{F:z-pinch-const})
\begin{equation}\label{E:Bphi}
\bar B_\theta(\bar r) = \frac{4\bar r}{(e^{-D}+\bar r^2)}
\end{equation}
This decays as $\sim 1/\bar r$ as $\bar r \to +\infty$, which means that $\bds B$ just fails to belong to $L^2(\mathbb{R}^3)$.  In fact, \cite[Theorem 2, p. 275]{GlasseyStrauss1984} implies that there are no nontrivial finite-energy equilibrium solutions to VM in $\mathbb{R}^3$.  That theorem defines the local energy of a VM solution to be
$$E_R(t) = \int_{|\bds x|<R} \left( |\bds E(\bds x, t)|^2 +  |\bds B(\bds x, t)|^2 +  \int_{\bds v} |\bds v|^2 f(\bds x, \bds v, \bds t) \, d \bds v \right) \, d\bds x$$
and shows that, for a finite energy solution, there exists a sequence $t_n\nearrow +\infty$ such that $E_R(t_n) \to 0$.  However, for an equilibrium solution, $E_R(t)$ is constant in $t$.  

Upon substituting \eqref{E:Bennett} with $D=0$ into \eqref{E:z-mu-pm-3},
$$n^\pm(\bar r) = \frac{1}{\beta^2\zeta} \frac{a}{|q^\pm|} \frac{1}{(1+ \bar r^2)^2}$$
indicating a spatial decay rate for the density of $\sim r^{-4}$.  From \eqref{E:z-const-curr}, the $z$-current density is given by the same expression.  While the charges balance, there is a net current in the $z$-direction, and hence the name $z$-pinch for such solutions.

The expression for $\mu^\pm$ above shows a \emph{constant drift} in the $z$-velocity of the two species equal to $\pm\sigma^\pm$, which are of opposite signs.  The next family of solutions that we construct have variable drift in the $z$-velocity and have $z$-current density decaying faster than $\sim r^{-4}$.

\begin{figure}
\includegraphics[scale=0.65]{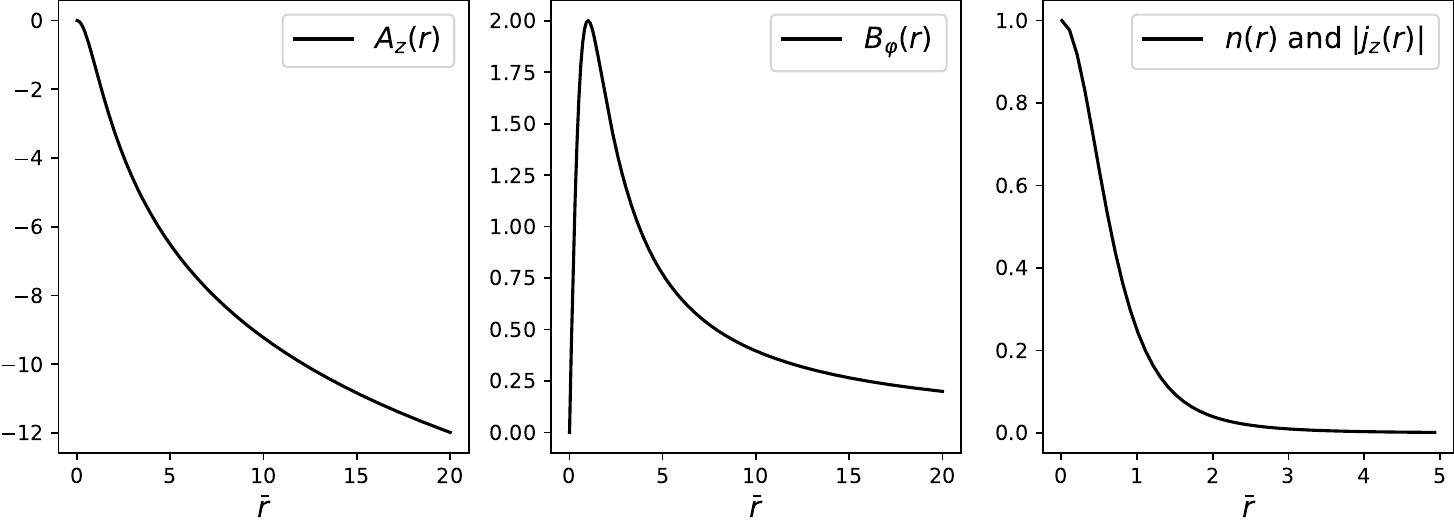}
\caption{(left) plot of $\bar A_z(\bar r)$ solving \eqref{E:z-const-ODE} in Theorem \ref{T:z-pinch-const} given explicitly by \eqref{E:Bennett} with $D=0$, which diverges logarithmically as $\bar r \to +\infty$; (middle) plot of the corresponding magnetic field $\bar B_\theta(\bar r)=-\partial_{\bar r}\bar A_z(\bar r)$ given explicitly by \eqref{E:Bphi}, which decays like $B_\theta(\bar r)\sim 1/\bar r$; and (right) the number densities $n^\pm(\bar r)$ and absolute value of current density $|j_z(\bar r)|$, which decay like $1/\bar r^4$ (both with omitted multiplicative constant).}
\label{F:z-pinch-const}
\end{figure}

\begin{theorem}[$z$-pinch, variable drift equilibria]
\label{T:z-pinch-var}
Select $a>0$ (units of charge) and then let $d$ be the dimensionless constant \eqref{E:d-def}.  Let $\bar A_z(\bar r)$ solve the dimensionless second-order ODE
\begin{equation}\label{E:z-var-ODE}
-\partial_{\bar r}^2 \, \bar A_z - \frac1{\bar r} \, \partial_{\bar r} \, \bar A_z = - \tfrac12 \,d \,\bar A_z \,\exp[-\tfrac12\bar A_z^2]
\end{equation}
with boundary conditions at $\bar r=0$ given by
\begin{equation}\label{E:z-var-boundary}
\bar A_z(0) \text{ finite} \,, \qquad \partial_{\bar r}\bar A_z(0) =0
\end{equation}
Then we can produce an equilibrium solution to the VM system as follows.  Let $r = \beta \bar r$, the electric potential $\phi=0$, the cylindrical components of the magnetic potential $\bds A$ be
$$ A_r=0\,, \quad A_\theta=0\,, \quad A_z(r) = \frac{\sqrt 2 \bar A_z(\bar r)}{\lambda} \,, $$
and the densities be $f^\pm (r, v_r,v_\theta,v_z) = \mu^\pm( e^\pm(v_r,v_\theta,v_z),p_z^\pm(r,v_z))$, where
\begin{equation}\label{E:modz-mu-pm-2}
\mu^\pm(e^\pm, p_z^\pm) =  \frac{a}{\beta^2\zeta (\sigma^\pm)^3|q^\pm|} \exp \left( -\frac{e^\pm}{m^\pm (\sigma^\pm)^2}  - \frac{ (p_z^\pm)^2}{2(m^\pm)^2 (\sigma^\pm)^2}\right)
\end{equation}
To get the explicit $r$ and $\bds v$ expression for the densities exhibiting the shift in the $z$-velocity, we complete the square in the exponential to obtain
$$\mu^\pm(r, v_r, v_\theta, v_z) =  \frac{n^\pm(\bar r)}{(2\pi)^{3/2} (\sigma^\pm)^3}   \exp \Big[ - \frac12 \Big( \frac{v_r}{\sigma^\pm} \Big)^2 - \frac12 \Big( \frac{v_\theta}{\sigma^\pm} \Big)^2 - \Big( \frac{ v_z}{\sigma^\pm} \pm \frac12  \bar A_z(\bar r) \Big)^2\Big]$$
where the number densities of the two species are
$$n^\pm( \bar r) =\frac{(2\pi)^{3/2}}{\beta^2\zeta} \frac{a}{|q^\pm|} \exp [ - \tfrac12 \bar A_z(\bar r)^2]$$
Then $(f^\pm, \phi, \bds A)$ is an equilibrium solution to VM.  The resulting charge density $\rho=0$ and the current has cylindrical components
\begin{equation}\label{E:z-var-curr}
j_r=0\,, \quad j_\theta=0\,, \quad j_z(r) = -\frac{d}{\sqrt 2 \mu_0 \beta^2 \lambda}  \bar A_z(\bar r)  \exp[ -\tfrac12 \bar A_z(\bar r)^2]
\end{equation}
The electric field $\bds E=0$ and the magnetic field $\bds B$ has cylindrical components 
$$
B_r=0\,, \quad B_z=0\,, \quad  B_\theta(r) = \frac{\sqrt 2}{\lambda \beta} \bar B_\theta(\bar r)  \,, \quad \text{where }\bar B_\theta = -\partial_{\bar r} \bar A_z$$
\end{theorem}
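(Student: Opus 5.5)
The plan is to verify the three ingredients of an equilibrium in turn. These are the Vlasov equations, the charge and current moments, and the reduced Maxwell equations \eqref{E:M-cyl} in the cylindrically and vertically symmetric setting, with $\phi=0$ and $\bds A=A_z(r)\bds e_z$.

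\emph{Vlasov equations.} Since $f^\pm=\mu^\pm(e^\pm,p_z^\pm)$ and, with $\phi=0$, both $e^\pm$ and $p_z^\pm=m^\pm v_z+q^\pm A_z(r)$ are conserved along particle trajectories (reviewed in \S\ref{S:equil}), the transport operator in \eqref{E:ax_vert_Vlasov} annihilates $f^\pm$ by the chain rule. So the Vlasov equations hold for any $C^1$ profile $\mu^\pm$, with no further computation.

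\emph{Moments.} I would next compute $\rho$ and $\bds j$ by completing the square in the exponent of \eqref{E:modz-mu-pm-2}. Using $\lambda$ from \eqref{E:lambda-def}, one has $q^\pm A_z/(m^\pm\sigma^\pm)=\pm\lambda A_z=\pm\sqrt2\,\bar A_z$. Hence
$$\frac{p_z^\pm}{m^\pm\sigma^\pm}=\frac{v_z}{\sigma^\pm}\pm\sqrt2\,\bar A_z,$$
and the $v_z$-part of the exponent becomes a Gaussian in $v_z$ with mean proportional to $\mp\sigma^\pm\bar A_z$, multiplied by the factor $\exp[-\tfrac12\bar A_z^2]$. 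The $v_r,v_\theta$ integrals are standard Gaussians, and the $v_z$ integral has a modified variance. This step yields the number densities $n^\pm(\bar r)$, which are proportional to $\frac{a}{|q^\pm|}e^{-\bar A_z^2/2}$, together with the stated $(r,\bds v)$ form of $\mu^\pm$. Because $n^\pm$ carries the factor $1/|q^\pm|$, the products $q^\pm n^\pm$ are equal and opposite, so $\rho=\sum_\pm q^\pm\int f^\pm\,d\bds v=0$.

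By the oddness of the remaining integrands in $v_r$ and $v_\theta$, $j_r=j_\theta=0$. The $z$-current is
$$j_z=\sum_\pm q^\pm n^\pm\,\langle v_z\rangle^\pm .$$
Here $q^\pm n^\pm=\pm C a\,e^{-\bar A_z^2/2}/(\beta^2\zeta)$ for a constant $C$, and $\langle v_z\rangle^\pm=\mp c\,\sigma^\pm\bar A_z$ for a constant $c$. The sign flips cancel, so both species contribute with the same sign, giving
$$j_z\propto -(\sigma^++\sigma^-)\,\bar A_z\,e^{-\bar A_z^2/2}.$$
Finally, substituting $\sigma^\pm=|q^\pm|/(m^\pm\lambda)$ converts $\sigma^++\sigma^-$ into $\lambda^{-1}\sum_\pm|q^\pm|/m^\pm$. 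This is exactly the combination appearing in $d$ \eqref{E:d-def}, and it gives \eqref{E:z-var-curr}.

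\emph{Maxwell equations.} With $\phi=0$ we have $\bds E=0$, and Gauss's law holds since $\rho=0$. In the Coulomb gauge with $\bds A=A_z(r)\bds e_z$, we get $\bds B=\curl\bds A=-\partial_rA_z\,\bds e_\theta$, and Amp\`ere's law reduces to
$$-(\partial_r^2+r^{-1}\partial_r)A_z=\mu_0 j_z .$$
Setting $r=\beta\bar r$ and $A_z=\sqrt2\,\bar A_z/\lambda$, the left side becomes $\frac{\sqrt2}{\lambda\beta^2}(-\partial_{\bar r}^2-\bar r^{-1}\partial_{\bar r})\bar A_z$. Inserting \eqref{E:z-var-curr}, this is precisely \eqref{E:z-var-ODE}, and it yields the stated formula for $B_\theta$. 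The boundary conditions \eqref{E:z-var-boundary} ensure that $A_z$ is smooth at the axis, so $\bds B$ is regular there.

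\emph{Main obstacle.} The main difficulty is purely bookkeeping in the moment step. One must track the Gaussian normalizations: the $v_z$ variance is altered by the extra $-(p_z^\pm)^2/(2(m^\pm)^2(\sigma^\pm)^2)$ term, so the $v_z$ normalization is not $\sqrt{2\pi}\,\sigma^\pm$. One must also track the precise drift coefficient. These constants together must reproduce the numerical factor in $d$ and the $\sqrt2$ in $A_z$. I would carry them out explicitly and, if a mismatch with $d$ arises, absorb it into the scaling of $\bar r$, using the same reasoning as in the remark after Theorem \ref{T:z-pinch-const}.
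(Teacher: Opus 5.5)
Your argument follows the paper's route in \S\ref{SS:var-z}. Vlasov holds because $f^\pm$ depends only on the invariants $e^\pm,p_z^\pm$. Completing the square in $v_z$ and taking Gaussian moments gives $\rho=0$, using the $1/|q^\pm|$ normalization and the fact that $|q^\pm A_z/(m^\pm\sigma^\pm)|=\lambda|A_z|$ is the same for both species. Parity gives $j_r=j_\theta=0$, and $j_z$ is then fed into $-\frac1r\partial_r r\partial_r A_z=\mu_0 j_z$. The paper instead starts from a general $(\gamma^\pm,\omega^\pm)$ ansatz, derives the common $\lambda,a$ from $\rho\equiv0$, and then sets $\gamma^\pm=\pm1$. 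You verify neutrality directly for the specialized profile, which is the natural direction for proving the statement as posed. Structurally this is fine.

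The step that does not go through is the one you postponed: the claim that the constants reproduce \eqref{E:z-var-curr} and \eqref{E:z-var-ODE} exactly. With $q^\pm A_z/(m^\pm\sigma^\pm)=\pm\sqrt2\,\bar A_z$, the exponent is
\[
-\frac{v_r^2+v_\theta^2}{2(\sigma^\pm)^2}-\Big(\frac{v_z}{\sigma^\pm}\pm\frac{\bar A_z}{\sqrt2}\Big)^2-\frac12\bar A_z^2 .
\]
Hence the $v_z$ factor integrates to $\sqrt\pi\,\sigma^\pm$, not $\sqrt{2\pi}\,\sigma^\pm$. The drift is $\mp\sigma^\pm\bar A_z/\sqrt2$, not $\mp\sigma^\pm\bar A_z/2$, and $\int f^\pm\,d\bds v=2\pi^{3/2}a\,e^{-\bar A_z^2/2}/(\beta^2\zeta|q^\pm|)$. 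The current and the resulting field equation are
\[
j_z=-\frac{\sqrt2\,\pi^{3/2}a}{\beta^2\zeta\lambda}\Big(\sum_\pm\frac{|q^\pm|}{m^\pm}\Big)\bar A_z\,e^{-\bar A_z^2/2},
\qquad
-\partial_{\bar r}^2\bar A_z-\frac1{\bar r}\partial_{\bar r}\bar A_z=-\frac{d}{2\sqrt{2e}}\,\bar A_z\,e^{-\bar A_z^2/2},
\]
with $d$ as in \eqref{E:d-def} and $e$ Euler's number. So the coefficient is $d/(2\sqrt{2e})$, not $d/2$. The factor $e^{1/2}$ in \eqref{E:d-def} comes from the $+\tfrac12$ in the constant-drift exponent \eqref{E:zp05} and has no analogue here.

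The paper's own computation has the same slips: it writes $(2\pi)^{3/2}$ for $2\pi\cdot\sqrt\pi$ in \eqref{E:rho-zp2} and in the subsequent $j_z$, and it uses a section-local $d$. So you are not missing an idea, but the statement as written cannot be proved. For any nontrivial solution of \eqref{E:z-var-ODE}, the stated data violate Amp\`ere by the factor $\sqrt{2e}$ wherever $\bar A_z\neq0$.

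Your fallback of absorbing the mismatch into the scaling of $\bar r$ does not rescue the statement, because both $r=\beta\bar r$ and $d$ are prescribed. What your argument actually proves is a corrected theorem, with these changes:
\begin{enumerate}
\item replace $\tfrac12 d$ by $\pi^{3/2}\mu_0 a\,\zeta^{-1}\sum_\pm|q^\pm|/m^\pm$; equivalently take $r=(2e)^{1/4}\beta\bar r$, or replace $a$ by $\sqrt{2e}\,a$ in $\mu^\pm$ while keeping $d$;
\item use drift $\bar A_z/\sqrt2$ in the displayed density;
\item note that the displayed $n^\pm$ is $\sqrt2$ times $\int f^\pm\,d\bds v$.
\end{enumerate}
You should carry out the constants and state this correction explicitly rather than asserting exact agreement.
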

This is proved in \S\ref{SS:var-z}.  We remark there that an apparently more general ansatz for $\mu^\pm$ adding a term linear in $p_z^\pm$ can be effectively removed by a shift in $A_z$.  In \eqref{E:z-var-ODE}, the constant $d$ can be removed by replacing $\bar r$ by $\sqrt{d}\, \bar r$.   Note that, in constrast to \eqref{E:z-const-curr}, the equation \eqref{E:z-var-ODE} is invariant under the change $\bar A_z \to -\bar A_z$.  

Although in contrast to \eqref{E:z-const-ODE}, \eqref{E:z-var-ODE} admits solutions decaying exponentially as $\bar r\to +\infty$, the mechanical analysis at the end of \S\ref{SS:var-z} shows that none of these solutions will satisfy the boundary conditions \eqref{E:z-var-boundary} at $\bar r=0$.  Indeed, if they did, such solutions would violate the conclusion of \cite[Theorem 2, p. 275]{GlasseyStrauss1984}, which implies that there are no nontrivial finite-energy equilibrium solutions to VM in $\mathbb{R}^3$.   There are no solutions to \eqref{E:z-var-ODE} for which $\bar A_z(\bar r)$ converges to a nonzero constant as $\bar r\to+\infty$, and thus the only possible behavior for solutions that satisfy \eqref{E:z-var-boundary} is 
$$\bar A_z(\bar r) \to \infty \quad \text{ as } \quad \bar r \to +\infty$$
This gives the physically desirable consequence that $j_z(r) \to 0$ as $r \to \infty$, from \eqref{E:z-var-curr}.  From \eqref{E:z-var-ODE}, we have that as $\bar r\to \infty$, $\bar A_z \sim A$ satisfying $-\partial_{\bar r}^2 \, A - \frac1{\bar r} \, \partial_{\bar r} \, A = 0$, and thus
\begin{equation}\label{E:z-var-j-decay}
\bar A_z(\bar z) \sim  C\ln \bar r + E \quad \text{ as } \quad \bar r \to +\infty
\end{equation}
For example, simulations with $\bar A_z(0)=2$ suggest that $C\approx 1.67$ in that case, see Figure \ref{F:z-pinch-var}.  The current given by \eqref{E:z-var-curr} behaves as
$$j_z(r) \sim (\log \bar r) \bar r^{-\frac{C}{2}\log \bar r} \quad \text{ as } \bar r \to +\infty$$
thus, decaying faster than any power, in contrast to the $\bar r^{-4}$ decay rate for solutions in Theorem \ref{T:z-pinch-const}. We thus have a super-power rate of decay for the number densities
\begin{equation}
\label{E:z-var-n-decay}
n^\pm(\bar r) \sim  \bar r^{-\frac{C}{2}\log \bar r} \quad \text{ as } \bar r \to +\infty
\end{equation}
and the $z$-drift velocity is $\mp \bar A_z(\bar r) \sigma^\pm /2$, which is dependent on $\bar r$, in contrast to Theorem \ref{T:z-pinch-const}.  Thus we have called this the $z$-pinch equilibrium with variable drift velocity.  Notably, due to the logarithmic divergence of $\bar A_z(\bar r)$ as $\bar r\to \infty$, this drift speed diverges as $\bar r\to \infty$.  The drift velocities of the two species are in opposite directions.

Note that in the exponential of the expression for $\mu^\pm$ above, there is a coefficient of $\frac12$ in front of the $v_r$ and $v_\theta$ terms, but a coefficient of $1$ in front of the $v_z$ term.  The physical interpretation is that the thermal speeds are different in the $r$, $\theta$ directions versus the $z$ direction.  In the physics literature, this was studied by Mahajan \cite[Equation (37)]{mahajan_1989} and by Channon \& Coppins \cite[Equation (3.5)]{CHANNON_COPPINS_2001} by an approach different from ours.

\begin{figure}
\includegraphics[scale=0.75]{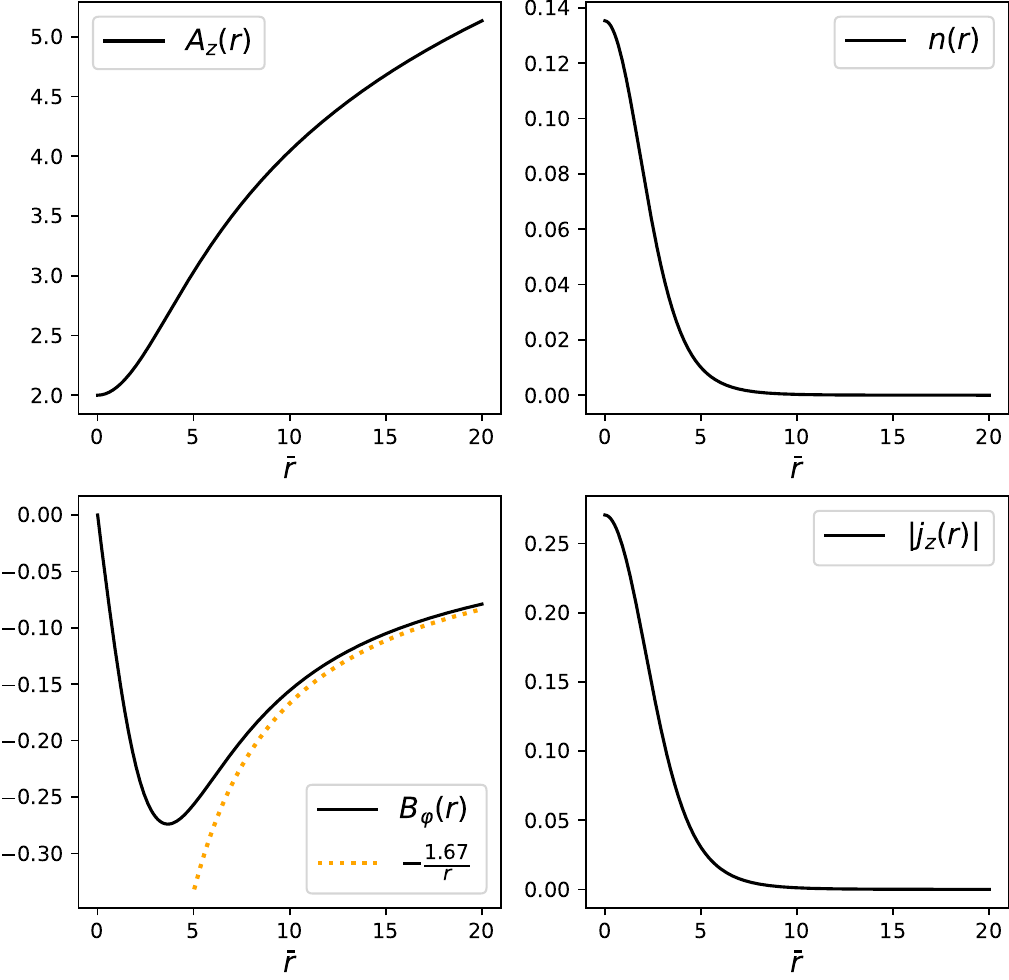}
\caption{(top left) plot of $\bar A_z(\bar r)$ solving \eqref{E:z-var-ODE} in Theorem \ref{T:z-pinch-var} with $\bar A_z(0)=2$, $\partial_{\bar r} \bar A_z(0)=0$, which diverges as $\approx 1.67 \log \bar r$ as $\bar r \to +\infty$; (bottom left) plot of the corresponding magnetic field $\bar B_\theta(\bar r)=-\partial_{\bar r}\bar A_z(\bar r)$, which decays slowly as $\approx -1.67/\bar r$; (top right) the number densities $n^\pm(\bar r)$ (with omitted multiplicative constant), decaying at the super-power rate \eqref{E:z-var-n-decay}; (bottom right) $|j_z(r)|$  (with omitted multiplicative constant), decaying at the super-power rate \eqref{E:z-var-j-decay}.}
\label{F:z-pinch-var}
\end{figure}

\begin{theorem}[$\theta$-pinch equilibria]
\label{T:theta-pinch-var}
Select $a>0$ (units of charge) and then let $d$ be the dimensionless constant \eqref{E:d-def}.
Let $\bar A_\theta(\bar r)$ solve the dimensionless second-order ODE
\begin{equation}\label{E:theta-var-ODE}
\Big(- \partial_{\bar r}^2 - \frac{1}{\bar r} \partial_{\bar r} + \frac{1}{\bar r^2} \Big)\bar A_\theta = - d \frac{ \bar r^2 \bar A_\theta}{(1 + \bar r^2)^{3/2}}\exp\left[ - \frac{ \bar r^2 \bar A_\theta^2}{2(1 + \bar r^2)} \right]
\end{equation}
with the physical boundary conditions 
\begin{equation}\label{E:t-var-boundary}
\bar A_\theta(0)=0 \,, \qquad \partial_r A_\theta(0) \text{ finite }
\end{equation}
Then we can produce an equilibrium solution to the VM system as follows.   
Let $r=\beta \, \bar r$ and take the electric potential $\phi=0$, the cylindrical components of the magnetic potential $\bds A$ to be 
$$A_r=0 \,, \quad A_\theta(r) = \lambda^{-1} \bar A_\theta(\bar r) \,, \quad A_z=0\,, $$
and the densities to be $f^\pm (r, v_r, v_\theta, v_z) = \mu^\pm( e^\pm(v_r,v_\theta,v_z),\ell_z^\pm(r, v_\theta))$, where
\begin{equation}\label{E:modt-mu-pm-2}
\mu^\pm(e^\pm, \ell_z^\pm) =  \frac{a}{\beta^2\zeta (\sigma^\pm)^3|q^\pm|} \exp \left( -\frac{ e^\pm}{m^\pm (\sigma^\pm)^2}  - \frac{ (\ell_z^\pm)^2}{2(m^\pm)^2 (\sigma^\pm)^2 \beta^2}\right)
\end{equation}
From this, we obtain the expression for densities in terms of $r$ and $\bds v$ exhibiting the shift in $\theta$-velocity, by completing the square in the exponential of \eqref{E:modt-mu-pm-2},
\begin{equation}
\label{E:modt-mu-pm-3}
\begin{aligned}
\mu^\pm(r,v_r,v_\theta,v_z) =  n^\pm(\bar r)  \,&\, \frac{1}{(2\pi)^{3/2} (\sigma^\pm)^3}  \sqrt{1+\bar r^2} \exp\Big[ -\frac12 \Big(\frac{ v_r}{\sigma^\pm} \Big)^2 \\
&-\frac12 \Big(\frac{ v_z}{\sigma^\pm} \Big)^2 -\frac12(1+ \bar r^2) \Big(\frac{ v_\theta}{\sigma^\pm} \pm \frac{\bar r^2}{1+\bar r^2} \bar A_\theta(\bar r)\Big)^2 \Big]
\end{aligned}
\end{equation}
where the number densities 
$$n^\pm(\bar r) = \frac{(2\pi)^{3/2} a}{|q^\pm| \beta^2 \zeta} \frac{1}{\sqrt{1+\bar r^2}} \exp \Big[ -\frac{\bar r^2 \bar A_\theta(\bar r)^2}{2(1+\bar r^2)} \Big]$$

Then $(f^\pm, \phi, \bds A)$ is an equilibrium solution to the VM system.  The resulting total charge density $\rho=0$ and the total current has cylindrical components 
\begin{equation}
\label{E:t-var-curr}
j_r=0\,, \quad 
j_\theta(r)  = - \frac{d}{\mu_0\beta^2\lambda} \frac{ \bar r^2  \bar A_\theta(
\bar r)}{(1 + \bar r^2)^{3/2}} \exp\left[ - \frac{ \bar r^2 \bar A_\theta(\bar r)^2}{2(1 + \bar r^2)} \right]\,, \quad j_z=0\,,
\end{equation}
The electric field $\bds E=0$ and the magnetic field $\bds B$ has cylindrical components 
$$
B_r=0\,, \quad B_\theta=0\,, \quad B_z(r) = \frac{ \bar B_z(\bar r)}{\lambda \beta}  \,,$$
where
$$\bar B_z = \bar r^{-1} \partial_{\bar r}(\bar r \bar A_\theta) = \partial_{\bar r} \bar A_\theta + \frac{\bar A_\theta}{\bar r}$$
\end{theorem}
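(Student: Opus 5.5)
The plan is to verify directly that the proposed $(f^\pm,\phi,\bds A)$ satisfies the reduced equilibrium system: the Vlasov equations are automatic, and the Maxwell equations reduce to \eqref{E:theta-var-ODE}. \emph{Vlasov:} since $f^\pm=\mu^\pm(e^\pm,\ell_z^\pm)$ and $e^\pm$, $\ell_z^\pm$ are invariants of the particle trajectories (reviewed in \S\ref{S:equil}) for time-independent, $\theta$- and $z$-invariant fields with $\phi=0$, $A_r=A_z=0$, $A_\theta=A_\theta(r)$, the Vlasov equations \eqref{E:ax_vert_Vlasov} hold for any smooth $\mu^\pm$ with sufficient decay. \emph{Fields:} with $\phi=0$ we get $\bds E=0$. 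The Coulomb gauge $\diver \bds A=0$ is immediate, since $A_\theta$ depends only on $r$. The curl formula in cylindrical coordinates gives $B_r=B_\theta=0$ and $B_z=r^{-1}\partial_r(rA_\theta)$. Rescaling $r=\beta\bar r$ and $A_\theta=\lambda^{-1}\bar A_\theta$ then produces the stated $B_z=\bar B_z/(\lambda\beta)$.

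Next I compute the moments. Writing $e^\pm=\frac12 m^\pm|\bds v|^2$ and $\ell_z^\pm=r(m^\pm v_\theta+q^\pm A_\theta)$, I use \eqref{E:lambda-def} to note that
\[
\frac{\ell_z^\pm}{m^\pm\sigma^\pm\beta}=\bar r\Big(\frac{v_\theta}{\sigma^\pm}\pm\bar A_\theta\Big),
\]
because $q^\pm A_\theta/(m^\pm\sigma^\pm)=\pm\lambda A_\theta=\pm\bar A_\theta$. The exponent in the $v_\theta$ variable is then
\[
-\tfrac12 u^2-\tfrac12\bar r^2(u\pm\bar A_\theta)^2,\qquad u=v_\theta/\sigma^\pm .
\]
Completing the square gives
\[
-\tfrac12(1+\bar r^2)\Big(u\pm\tfrac{\bar r^2}{1+\bar r^2}\bar A_\theta\Big)^2-\frac{\bar r^2\bar A_\theta^2}{2(1+\bar r^2)},
\]
which is \eqref{E:modt-mu-pm-3} with the stated $n^\pm$. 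I then integrate the Gaussians in $v$. The mass moment gives $\int f^\pm\,d\bds v=n^\pm$, and since $|q^+|n^+=|q^-|n^-$ we get $\rho=\sum q^\pm n^\pm=0$, consistent with $\phi=0$. The odd symmetry in $v_r$ and $v_z$ kills $j_r$ and $j_z$. The $v_\theta$ moment gives
\[
\int v_\theta f^\pm\,d\bds v=\mp\sigma^\pm\frac{\bar r^2}{1+\bar r^2}\bar A_\theta\,n^\pm,
\]
so
\[
j_\theta=-\sum_\pm|q^\pm|\sigma^\pm n^\pm\frac{\bar r^2\bar A_\theta}{1+\bar r^2}.
\]
Inserting $n^\pm$ and using $\sigma^\pm=|q^\pm|/(m^\pm\lambda)$ turns the prefactor into
\[
\frac{(2\pi)^{3/2}a}{\beta^2\zeta\lambda}\sum_\pm\frac{|q^\pm|}{m^\pm}.
\]
Matching this against \eqref{E:d-def} yields \eqref{E:t-var-curr}.

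The remaining equation is Amp\`ere's law $-\Delta\bds A=\mu_0\bds j$ in the Coulomb gauge, i.e. \eqref{E:M-cyl}. For $\bds A=A_\theta\bds e_\theta$ its $\theta$-component is
\[
\Big(-\partial_r^2-\frac1r\partial_r+\frac1{r^2}\Big)A_\theta=\mu_0 j_\theta .
\]
Rescaling, the left side becomes $(\lambda\beta^2)^{-1}$ times the dimensionless operator applied to $\bar A_\theta$. Multiplying through by $\lambda\beta^2$ and substituting \eqref{E:t-var-curr} gives exactly \eqref{E:theta-var-ODE}, which $\bar A_\theta$ satisfies by hypothesis. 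The boundary condition \eqref{E:t-var-boundary} ensures that $A_\theta\bds e_\theta$ is a regular vector field at the axis, so the equations hold on $\R^2\times\T$ and not just for $r>0$.

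The main obstacle is bookkeeping of constants. I need the factor $\sqrt{1+\bar r^2}$ from the $v_\theta$ Gaussian to cancel correctly against $n^\pm$. I also need the dimensional prefactor of $\mu^\pm$ to reduce, via \eqref{E:lambda-def} and \eqref{E:d-def}, to $d/(\mu_0\beta^2\lambda)$. The factor $e^{1/2}$ appearing in $d$ does not arise naturally in this $\theta$-pinch computation. I would check whether the prefactor of $\mu^\pm$ should carry a compensating constant, or whether $d$ is meant as in the other theorems up to that harmless constant, which could then be absorbed by rescaling $\bar r$.
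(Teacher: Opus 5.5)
Your proposal is correct and follows essentially the same route as the paper. You complete the square in $v_\theta$, take Gaussian moments to get $\rho=0$ and $j_\theta$, and rescale the $\theta$-component of Amp\`ere's law, $-\partial_r r^{-1}\partial_r(rA_\theta)=\mu_0 j_\theta$, into \eqref{E:theta-var-ODE}. The only difference is that the paper carries general constants $\gamma^\pm,\omega^\pm$ and derives the neutrality constraints defining $\lambda$ and $a$ before setting $\gamma^\pm=\pm1$. Your suspicion about the factor $e^{1/2}$ is also justified. In the paper's own derivation, $d$ is defined as $(2\pi)^{3/2}a\mu_0\zeta^{-1}\sum_\pm |q^\pm|/m^\pm$ without $e^{1/2}$; that factor only arises from completing the square in the constant-drift $z$-pinch. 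So the statement's reference to \eqref{E:d-def} is off by exactly that constant.
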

This is proved in \S\ref{SS:t-var}.  
Since ODE \eqref{E:theta-var-ODE} is invariant under $\bar A_\theta \mapsto -\bar A_\theta$, we might as well assume $\partial_{\bar r} \bar A_\theta(0)>0$.   In Figure \ref{F:theta-pinch-var}, we take $\bar A_\theta(0)=0$ and $\partial_{\bar r} \bar A_\theta(0)=1$.  Solutions satisfying the boundary conditions diverge as $\bar r \to \infty$, causing the right-side of \eqref{E:theta-var-ODE} to go to zero.  Thus $\bar A_\theta(\bar r)$ is approximately $A$ solving $(- \partial_{\bar r}^2 - \frac{1}{\bar r} \partial_{\bar r} + \frac{1}{\bar r^2})A = 0$, which has solutions $A(\bar r) = \bar r$ and $A(\bar r)=1/\bar r$.  Thus 
$$\bar A_\theta (\bar r) \approx C \bar r \quad \text{ as } \quad  \bar r \to \infty$$
In the solution computed for Figure \ref{F:theta-pinch-var} with $\bar A_\theta(0)=0$ and $\partial_{\bar r}\bar A_\theta(0)=1$, we find $C\approx 0.76$.

Since $\bar A_\theta(\bar r) \sim \bar r$ as $\bar r\to \infty$, the number densities behave as
$$n^\pm(r) \sim \bar r^{-1} \exp[- C^2 \bar r^2/2] \quad \text{ as } \quad \bar r \to \infty$$
Likewise from \eqref{E:t-var-curr}, we find
$$j_\theta(r) \sim  \exp[- C^2 \bar r^2/2] \quad \text{ as } \quad \bar r \to \infty$$
Thus, despite the fact that the magnetic field does not decay as $\bar r \to \infty$, the particles and current are strongly confined.  Note that from \eqref{E:modt-mu-pm-3}, there is 
$$\text{drift in the $\theta$-velocity} = \mp \sigma^\pm \frac{\bar r^2}{1+\bar r^2} \bar A_\theta(\bar r)$$
which is in the opposite direction for two $\pm$ species.  This drift \emph{accelerates} while the number of particles rapidly vanishes as $\bar r \to \infty$.  Since the current points in the $\theta$ direction, this solution is called a $\theta$-pinch. 

\begin{figure}
\includegraphics[scale=0.75]{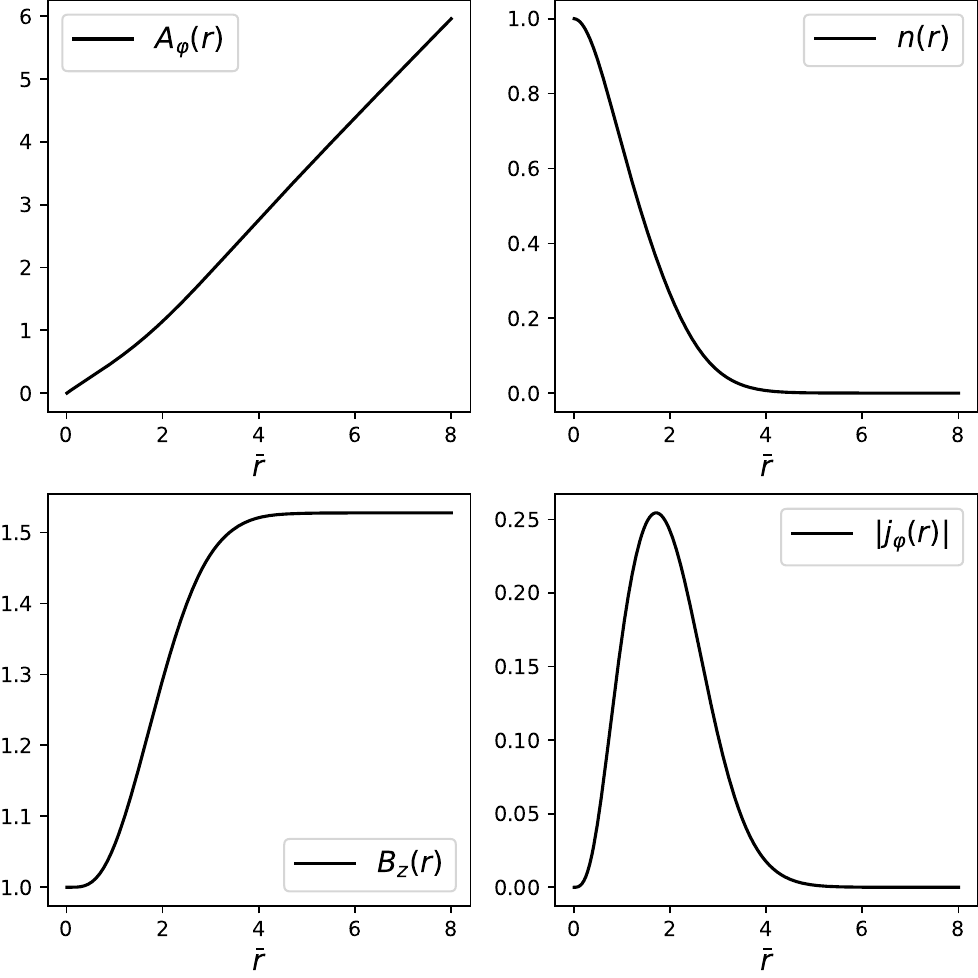}
\caption{(top left) plot of $\bar A_\theta(\bar r)$ solving \eqref{E:theta-var-ODE} in Theorem \ref{T:theta-pinch-var} with $\bar A_\theta(0)=0$ and $\partial_{\bar r}\bar A_\theta(0) = 1$, which grows as $\sim \bar r$ as $\bar r\to \infty$; (bottom left) plot of the corresponding magnetic field $\bar B_z(\bar r)=\bar r^{-1} \partial_{\bar r}(\bar r\bar A_\theta)(\bar r)$, which increases and levels off to a constant; (top right) plot of the number densities $n^\pm(\bar r)$ and (bottom right) $|j_\theta(r)|$, with omitted multiplicative constant, which decay at a near Gaussian rate, showing the strong localization of particles.}
\label{F:theta-pinch-var}
\end{figure}

We next discuss \emph{screw pinch} solutions that effectively interpolates between $z$-pinch and $\theta$-pinch.  Fix a pitch parameter $k \in \zeta \mathbb{Z}/2\pi$, which has SI units of m.  Using $$\delta(r) \defeq \sqrt{r^2+k^2}$$ (again SI units of m) rotate the cylindrical coordinate basis from $(\bds e_\theta, \bds e_z)$ to $(\bds e_\eta, \bds e_\sigma)$ where
$$\delta \bds e_\eta = r \bds e_\theta + k \bds e_z \,, \qquad \delta \bds e_\sigma = -k \bds e_\theta + r \bds e_z$$
The representations 
$$\bds v = v_r \bds e_r + v_\theta \bds e_\theta + v_z \bds e_z  = v_r \bds v_r + v_\eta \bds e_\eta + v_\sigma \bds e_\sigma$$
yield the component relationships
$$\delta v_\eta = r v_\theta + k v_z \,, \qquad \delta v_\sigma = -k v_\theta + r v_z$$
Analogous statements hold for $\bds A$ and $\bds j$.  For the equilibria that we present in Theorem \ref{T:screw-pinch-var} below, the densities $\mu^\pm$ are functions of $\ell_z^\pm + k p_z^\pm$, which depends only on $v_\eta$ and $A_\eta$.  Since the transformation $(\bds e_\theta, \bds e_z)$ to $(\bds e_\eta, \bds e_\sigma)$ is orthogonal, $e^\pm = v_r^2 + v_\eta^2 + v_\sigma^2$ when $\phi=0$.  
Consequently, the current density $\bds j$ is parallel to $\bds e_\eta$ (i.e. $j_\sigma=0$), and in this case one finds that $A_\sigma$ can be expressed in terms of $A_\eta$ via
\begin{equation}\label{E:Ase}
\partial_r (r\delta^{-1} A_\sigma) = 2kr \delta^{-3} A_\eta \qquad (\text{when }j_\sigma=0)
\end{equation}
This allows the Amp\`ere equation to be expressed entirely in terms of a second-order ODE connecting $A_\eta$ and $j_\eta$:
\begin{equation}\label{E:sp-Ampere}
 -\partial_{ r}^2 A_\eta - \frac{1}{ r}\partial_{ r}  A_\eta  + \Big( \frac{3 r^2}{\delta^4} - \frac{2}{\delta^2}\Big) A_\eta= \mu_0 j_\eta
\end{equation}
Also, we find that $\bds B$ is parallel to $\bds e_\sigma$ ($B_\eta=0$) and
\begin{equation}\label{E:sp-Bsigma}
B_\sigma = \frac{r}{\delta^2} A_\eta + \partial_r A_\eta
\end{equation}
When $k=0$, $A_\eta=A_\theta$, $j_\eta=j_\theta$ and $B_\sigma =B_z$. On a formal level, as $k\to \infty$, $A_\eta=A_z$, $j_\eta=j_z$, and $B_\sigma=-B_\theta$.  Both \eqref{E:sp-Ampere} and \eqref{E:sp-Bsigma} are consistent with these limits, reducing to the $\theta$-pinch when $k=0$ and $z$-pinch when $k\to \infty$.

\begin{theorem}[screw-pinch equilibria]
\label{T:screw-pinch-var}
Select $a>0$ (units of charge) and then let $d$ be the dimensionless constant \eqref{E:d-def}.
Let $\bar A_\eta(\bar r)$ solve
\begin{equation}\label{E:s-var-ODE}
 -\partial_{\bar r}^2\bar A_\eta - \frac{1}{\bar r}\partial_{\bar r}\bar  A_\eta  + \Big( \frac{3\bar r^2}{\delta^4} - \frac{2}{\delta^2}\Big)\bar A_\eta =  - d \, \frac{  \delta^2 \bar A_\eta}{ (1+\delta^2)^{3/2}} \exp\left[ - \frac{\delta^2 \bar A_\eta^2}{2(1+\delta^2)} \right]
\end{equation}
Then we can produce an equilibrium solution to VM as follows.   
Let $r = \beta \bar r$ and take the electric potential to be $\phi=0$, the magnetic potential $\bds A$ to have components $A_r=0$, 
$$A_\eta(r) = \lambda^{-1} \bar A_\eta(\bar r) \,,$$
and $A_\sigma$ given by \eqref{E:Ase};
and the densities to be 
$$f^\pm (r, v_r,v_\eta,v_\sigma) = \mu^\pm( e^\pm(v_r,v_\eta,v_\sigma),(\ell_z^\pm+kp_z^\pm)(r,v_\eta))\,,$$ 
where
\begin{equation}\label{E:mods-mu-pm-2}
\mu^\pm(e^\pm, \ell_z^\pm+kp_z^\pm) =  \frac{a}{\beta^2\zeta (\sigma^\pm)^3|q^\pm|} \exp \left( -\frac{e^\pm}{m^\pm (\sigma^\pm)^2}  - \frac{ (\ell_z^\pm+kp_z^\pm)^2}{2(m^\pm)^2 (\sigma^\pm)^2 \beta^2}\right)
\end{equation}
By completing the square in the exponential of \eqref{E:mods-mu-pm-2},
\begin{equation}
\label{E:mods-mu-pm-3}
\begin{aligned}
\mu^\pm(r,v_r,v_\eta,v_\sigma) =  n^\pm(\bar r) \, & \, \frac{1}{(2\pi)^{3/2} (\sigma^\pm)^3}  \sqrt{1+\delta^2} \exp\Big[ -\frac12 \Big(\frac{ v_r}{\sigma^\pm} \Big)^2 \\
&-\frac12 \Big(\frac{ v_\sigma}{\sigma^\pm} \Big)^2 -\frac12(1+ \delta^2) \Big(\frac{ v_\eta}{\sigma^\pm} + \frac{\delta^2}{1+\delta^2} \bar A_\eta(\bar r)\Big)^2 \Big]
\end{aligned}
\end{equation}
where the number densities are
$$n^\pm(\bar r) = \frac{(2\pi)^{3/2} a}{|q^\pm| \beta^2 \zeta} \frac{1}{\sqrt{1+\delta^2}} \exp \Big[ -\frac{\bar r^2 \bar A_\eta(\delta)^2}{2(1+\delta^2)} \Big]$$

Then $(f^\pm, \phi, \bds A)$ is an equilibrium solution to VM.  The resulting charge density $\rho=0$ and the current $\bds j$ has components 
$$j_r=0 \,, \quad j_\sigma=0 \,, \quad 
 j_\eta(r) = - \frac{d}{\mu_0\beta^2} \, \frac{  \delta^2 \bar A_\eta}{ (1+\delta^2)^{3/2}} \exp\left[ - \frac{\delta^2 \bar A_\eta^2}{2(1+\delta^2)} \right]
$$
\end{theorem}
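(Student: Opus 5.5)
The plan follows the same template as the $\theta$-pinch case (Theorem \ref{T:theta-pinch-var}): the Vlasov equation will hold automatically, and everything reduces to checking Amp\`ere's law.

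\textbf{Step 1: the Vlasov equation.} We first check that $\ell_z^\pm + k p_z^\pm$ is a trajectory invariant. This holds because both $\ell_z^\pm$ and $p_z^\pm$ are conserved in the cylindrically and vertically symmetric setting with time-independent fields, as reviewed in \S\ref{S:equil}. The energy $e^\pm$ is also conserved since $\phi=0$. Hence any $f^\pm=\mu^\pm(e^\pm,\ell_z^\pm+kp_z^\pm)$ solves the stationary Vlasov equation \eqref{E:ax_vert_Vlasov}.

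\textbf{Step 2: moments and completing the square.} Next we rewrite the exponent in the rotated frame. We have
$$\ell_z^\pm+kp_z^\pm = m^\pm(rv_\theta+kv_z)+q^\pm(rA_\theta+kA_z) = \delta\,(m^\pm v_\eta + q^\pm A_\eta),$$
and $e^\pm=\tfrac12 m^\pm(v_r^2+v_\eta^2+v_\sigma^2)$ because the basis change is orthogonal. Working in units of $\beta$ and using \eqref{E:lambda-def}, we get $q^\pm A_\eta/(m^\pm\sigma^\pm)=\pm\bar A_\eta$. Completing the square in $v_\eta$ then yields \eqref{E:mods-mu-pm-3} and the stated $n^\pm$, with the signs tracked carefully in the $v_\eta$ shift.

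We then integrate in $\bds v$:
\begin{enumerate}[left=0pt,label=$\bullet$]
\item The Gaussian integrals give $\int f^\pm\,d\bds v = n^\pm(\bar r)/(\text{const})$.
\item Since $a/|q^\pm|$ makes $q^\pm n^\pm$ equal and opposite for the two species, $\rho=0$.
\item The $v_r$ and $v_\sigma$ means vanish, so $j_r=j_\sigma=0$.
\item $j_\eta=\sum_\pm q^\pm\int v_\eta f^\pm$ equals $\sum_\pm q^\pm n^\pm$ times the drift $\mp\sigma^\pm\frac{\delta^2}{1+\delta^2}\bar A_\eta$. Both species contribute with the same sign, and the factors $\sigma^\pm$ and $|q^\pm|/m^\pm$ combine, via \eqref{E:lambda-def}, into the constant $d$ of \eqref{E:d-def}.
\end{enumerate}
This gives the stated $j_\eta$.

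\textbf{Step 3: Maxwell's equations.} Since $\rho=0$, taking $\phi=0$ and $\bds E=0$ satisfies Gauss's law. For Amp\`ere's law $\curl\curl \bds A=\mu_0\bds j$ in Coulomb gauge, with $j_\sigma=0$, we must verify the two relations stated before the theorem.
\begin{enumerate}[left=0pt,label=$\bullet$]
\item Write $A_\theta=(rA_\eta-kA_\sigma)/\delta$ and $A_z=(kA_\eta+rA_\sigma)/\delta$, and substitute into the cylindrical vector Laplacians $-(\partial_r^2+\frac1r\partial_r-\frac1{r^2})A_\theta$ and $-(\partial_r^2+\frac1r\partial_r)A_z$ from \eqref{E:M-cyl}.
\item Take the $\bds e_\sigma$-component of the result. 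This gives a first-order relation which is exactly \eqref{E:Ase}, so it is satisfied by our definition of $A_\sigma$.
\item Substitute that relation into the $\bds e_\eta$-component to obtain \eqref{E:sp-Ampere}.
\end{enumerate}

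\textbf{Main obstacle and scaling.} The main work is this last computation: derivatives of $r/\delta$ and $k/\delta$ generate many terms, and they must collapse to the potential $3r^2/\delta^4-2/\delta^2$. I would organize it by first computing $\curl\bds A$. This gives $B_\eta=0$ once \eqref{E:Ase} is imposed, together with \eqref{E:sp-Bsigma}. One more curl then yields \eqref{E:sp-Ampere} with fewer terms to track.

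Finally, set $r=\beta\bar r$ and $A_\eta=\lambda^{-1}\bar A_\eta$, interpreting $\delta$ in units of $\beta$ as in the statement. Then \eqref{E:sp-Ampere}, with the computed $j_\eta$, becomes \eqref{E:s-var-ODE}. Along the way we check that the constants $\mu_0$, $\beta^2$ and $\lambda$ cancel to leave exactly $d$.
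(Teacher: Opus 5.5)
Your proposal is correct and is essentially the paper's argument. The paper multiplies the $\theta$- and $z$-Amp\`ere equations by $k$ and $r$ and subtracts, which is exactly your $\bds e_\sigma$-component $\delta^{-1}\partial_r(\delta B_\eta)=0$; \eqref{E:Ase} is the choice $B_\eta=0$ of that first integral. Reinserting gives \eqref{E:sp08}, equivalent to \eqref{E:sp-Ampere}, and the moments come from the same completion of the square.

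One caveat for your constant check: done carefully, it yields the following, and these are misprints in the statement, not defects of your argument.
\begin{itemize}
\item $d$ comes out \emph{without} the factor $e^{1/2}$ of \eqref{E:d-def}, matching the paper's own definition in \S\ref{SS:s-var}.
\item $j_\eta$ carries an extra factor $\lambda^{-1}$.
\item The sign inside the square in \eqref{E:mods-mu-pm-3} is $\pm$, not $+$.
\item The exponent of $n^\pm$ is $-\delta^2\bar A_\eta(\bar r)^2/(2(1+\delta^2))$.
\end{itemize}
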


This is proved in \S\ref{SS:s-var}.
For $k=0$, the equation reduces to the $\theta$-pinch equation of Theorem \ref{T:theta-pinch-var}, with the $\bar r=0$ boundary conditions stated there.  But when $k\neq 0$, the boundary condition at $r=0$ is
$$\bar A_\eta(0) \text{ finite} \,, \qquad \partial_r \bar A_\eta(0) = 0$$
Moreover, as $r\to +\infty$, 
$$\frac{3}{\delta^4}-\frac{2}{\delta^2} = \frac{1}{r^2} - \frac{4k^2}{r^4} + O(r^{-6})$$
so the solutions are $\bar A_\eta(\bar r) \sim \bar r$ and $\bar A_\eta(\bar r)\sim r^{-1}$ as $\bar r \to +\infty$.    Note that \eqref{E:s-var-ODE} is invariant under $\bar A_\eta \to -\bar A_\eta$ and thus it suffices to assume $\bar A_\eta(0)>0$.   See Figure \ref{F:s-pinch-var} for a plot of the numerically obtained solution for $k=1$ and $\bar A_\eta(0) = 0.2$.


Since $\bar A_\theta(\bar r) \sim \bar r$ as $\bar r\to \infty$, the number densities behave as
$$n^\pm(r) \sim \bar r^{-1} \exp[- C^2 \bar r^2/2] \quad \text{ as } \quad \bar r \to \infty$$
Likewise from \eqref{E:t-var-curr}, we find
$$j_\theta(r) \sim  \exp[- C^2 \bar r^2/2] \quad \text{ as } \quad \bar r \to \infty$$
Thus, despite the fact that the magnetic field does not decay as $\bar r \to \infty$, the particles and current are strongly confined.  Note that from \eqref{E:mods-mu-pm-3}, there is 
$$\text{drift in the $\eta$-velocity} = \pm \sigma^\pm \frac{\delta^2}{1+\delta^2} \bar A_\eta(\bar r)$$
which is in the opposite direction for two $\pm$ species.  This drift \emph{accelerates} while the number of particles rapidly vanishes as $\bar r \to \infty$.  


\begin{figure}
\begin{center}
\includegraphics[scale=0.75]{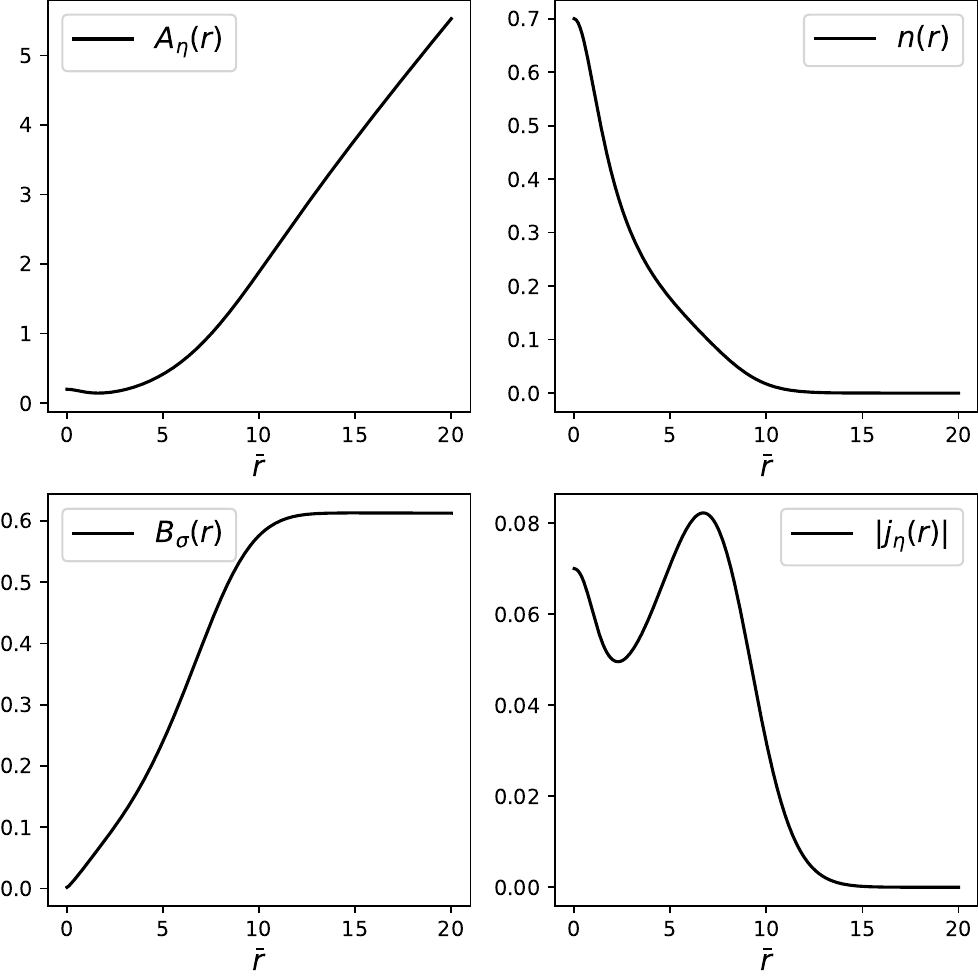}
\end{center}
\caption{(top left) plot of $\bar A_\eta(\bar r)$ solving \eqref{E:s-var-ODE} for pitch $k=1$ and initial conditions $\bar A_\eta(0)=0.2$, $\partial_{\bar r} \bar A_\eta(0) =0$; (bottom left) the corresponding magnetic field, which increases and levels off to a nonzero constant; (top right) the number density and (bottom right) the current density, both of which decay at a Gaussian rate, although the shapes are distorted in comparison to Figure \ref{F:theta-pinch-var} by the screw-pinch geometry.}
\label{F:s-pinch-var}
\end{figure}

The paper is organized as follows.  In \S\ref{S:VM}, we state the Vlasov-Maxwell system, introduce the electric and magnetic potentials, and in \S\ref{S:cyl}, we review the conventions and standard vector calculus formulae in cylindrical coordinates, and give a couple of examples highlighting computational pitfalls.  In \S\ref{S:equil}, we discuss the Noether invariants for particle trajectories that correspond to $\theta$ invariance (conservation of $\ell_z^\pm$) and $z$-invariance (conservation of $p_z^\pm$).  The densities of equilibrium solutions are then given as functions of these invariants.  In \S\ref{S:z-pinch}, we discuss $z$-pinch equilibria and give the proof of Theorems \ref{T:z-pinch-const}, \ref{T:z-pinch-var}.  In \S\ref{S:theta-pinch}, we discuss $\theta$-pinch equilibria and give the proof of Theorem \ref{T:theta-pinch-var}.   In \S\ref{S:screw}, we discuss screw-pinch equilibria and give the proof of Theorem \ref{T:screw-pinch-var}.  In \S\ref{S:external}, we discuss the modifications needed to include an external field, in which case it is possible to produce solutions in the context of variable drift $z$-pinch, $\theta$-pinch, or screw-pinch for which the magnetic field converges exponentially fast to zero.  In \S\ref{S:relativistic}, we briefly describe how the form of the equilibria can be adapted to the relativistic case.

In future work, we intend to study the linear stability and instability of the equilibria that we have presented in Theorems \ref{T:z-pinch-const}--\ref{T:screw-pinch-var} using the framework developed by Lin \& Strauss \cite{LinStraussLinear2007, LinStrauss2008}, and further explained schematically by Lin \cite{Lin2022} using the separable Hamiltonian language of Lin \& Zeng \cite{LinZeng2022}.  Other results exploiting these ideas are Nguyen \& Stauss \cite{NguyenStrauss2013, NguyenStrauss2014} and K.Z. Zhang \cite{KZZhang2019}.  The methods cited above reduce the linear stability of the equilibria to a spectral condition for an associated second-order scalar problem, which is itself still nontrivial and not yet investigated thoroughly for explicit or semi-explicit equilibria such as those we obtain here.  Stability of the $z$-pinch was studied in the MHD setting by Bian, Guo \& Tice \cite{BianGuoTice_Inviscid, BianGuoTice_Viscous}.  

In this paper, we have considered equilibria in cylindrical coordinates that depend only on the radial variable and for which the field equations reduce to a second-order ODE.  To obtain more complex equilibria that depend on two or all three variables, more abstract (less explicit) methods are needed -- for example, bifurcations methods are employed by K.Z. Zhang \cite{KZZhang2023}, and that paper gives an account of related literature.  However, solutions such as those we obtain can be the starting point in a bifurcation analysis.  Our purely radial solution should have an analogue in the more physical settting of the solid torus in the torodial/poloidal coordinate system.  Linear stability of RVM equilibria in this setting was studied by Nguyen \& Strauss \cite{NguyenStrauss2014}.  Non-symmetric plasma equilibria for the magnetohydrostatic (steady three-dimensional incompressible Euler) equations with a small force are constructed in Constantin, Drivas \& Ginsberg \cite{constantin2021quasisymmetric}. 

\subsection{Acknowledgments}
J.H. was partially supported by NSF Grant DMS-2452781.  K.Z.Z. was partially supported by Simons Travel Support for Mathematicians SFI-MPS-TSM-00012207.

\section{Vlasov-Maxwell equation \label{S:VM}}

Fix a vertical length $\zeta>0$.
We consider the $2$-species Vlasov-Maxwell system in $(\bds x, \bds v)$, where $\bds x\in \Omega=\mathbb{R}^2\times [0,\zeta)$, with periodic boundary conditions in $z$, and $\bds v \in \mathbb{R}^3$.  This system includes the Vlasov equations for the densities $f^\pm(t, \bds x, \bds v)$:
\begin{equation}
\label{E:Vlasov}
\partial_t f^{\pm} + \bds v\cdot \nabla_{\bds x} f^\pm + \frac{q^\pm}{m^\pm}(\bds E + \bds v \times \bds B) \cdot \nabla_{\bds v} f^\pm = 0
\end{equation}
and the Maxwell equations for the electric field $\bds E(t, \bds x)$ and magnetic field $\bds B(t, \bds x)$:
\begin{equation}
\label{E:Maxwell}
\begin{aligned}
&\nabla\cdot \bds E = \epsilon_0^{-1}\rho &\qquad& \nabla \times \bds E = -\partial_t \bds B \\
&\nabla \cdot \bds B = 0 && \nabla \times \bds B = \mu_0(\bds j + \epsilon_0 \partial_t \bds E)
\end{aligned}
\end{equation}
Here, the charge density $\rho$ and current density $\bds j$ are
\begin{equation}\label{E:charge_current}
\rho(t,\bds x) = \sum_{\pm} q^\pm \int f^\pm(t,\bds x, \bds v ) \, d\bds v \,, \qquad \bds j(t,\bds x) = \sum_{\pm} q^\pm \int \bds v f^\pm(t,\bds x, \bds v) \, d\bds v
\end{equation}

As is customary, we introduce the scalar electric potential $\phi(t,\bds x)$ and the vector magnetic potential $\bds A(t,\bds x)$ satisfying
\begin{equation}\label{E:potentials}
\bds E = -\nabla \phi - \partial_t \bds A \,, \qquad \bds B = \nabla \times \bds A 
\end{equation}
together with the Coulomb gauge condition $\nabla \cdot \bds A = 0$. Using the standard formula $\nabla \times (\nabla \times \bds A) = \nabla( \nabla \cdot \bds A) - \Delta \bds A$, the field equations convert to
\begin{equation}
\label{E:Maxwell-pot1}
-\Delta \phi = \epsilon_0^{-1} \rho \,, \qquad -\Delta \bds A = \mu_0 \bds j - \mu_0\epsilon_0 \partial_t \nabla \phi - \mu_0 \epsilon_0 \partial_t^2 \bds A
\end{equation}

\section{Cylindrical coordinates \label{S:cyl}}
At every $\bds x\in \Omega = \mathbb{R}^2\times [0,\zeta)$, alternatively represented in cyindrical coordinates as $(r,\theta,z)$, there is an orthonormal basis of the tangent space spanned by
$$ \bds e_r = (\cos \theta, \sin \theta, 0) \,,\quad 
\bds e_\theta = (-\sin \theta, \cos\theta, 0) \,, \quad \bds e_z = (0,0,1)$$
A position-velocity pair $(\bds x, \bds v)$ is regarded as belonging to the tangent bundle, so $\bds v \in T_{\bds x}\Omega$ and velocity can be represented 
$$\bds v = v_r \, \bds e_r + v_\theta \, \bds e_\theta + v_z \, \bds e_z$$
Vector fields, e.g.\ $A$, are sections of $T\Omega$, so $A(\bds x) \in T_{\bds x}\Omega$ can be decomposed into components which define the coefficient functions $A_r$, $A_\theta$, and $A_z$:
$$\bds A = A_r \bds e_r + A_\theta \bds e_\theta + A_z \bds e_z$$
The divergence, curl, and Laplacian have representations with respect to this frame.  For a scalar function, we have the formulae
\begin{equation}\label{E:grad-cyl}
\nabla \phi = \partial_r \phi \, \bds e_r + \frac{1}{r} \partial_\theta \phi \, \bds e_\theta + \partial_z\phi \,\bds e_z
\end{equation}
\begin{equation}
\label{E:lap-cyl}
\Delta \phi = \frac1r\partial_r (r\partial_r \phi) + \frac1{r^2} \partial_\theta^2\phi + \partial_z^2 \phi
\end{equation}
For a vector field, we have the formulae
\begin{equation}
\label{E:curl-cyl}
\nabla \times \bds A = (\frac1r \partial_\theta A_z - \partial_z A_\theta) \bds e_r + (\partial_z A_r - \partial_r A_z) \bds e_\theta + \frac1r(\partial_r (rA_\theta) - \partial_\theta A_r) \bds e_z
\end{equation}
\begin{equation}
\label{E:div-cyl}
\nabla \cdot \bds A = \frac1r \, \partial_r  \, r \, A_r  + \frac1r \partial_\theta A_\theta + \partial_z A_z
\end{equation}
\begin{equation}
\label{E:vec-lap-cyl}
\Delta \bds A = (\Delta A_r - \frac{A_r}{r^2} - \frac{2}{r^2}\partial_\theta A_\theta) \bds e_r + (\Delta A_\theta - \frac{A_\theta}{r^2} + \frac{2}{r^2} \partial_\theta A_r) \bds e_\theta + \Delta A_z \bds e_z
\end{equation}

There are many pitfalls when working with cylindrical coordinates, stemming from the fact that the coordinate frame depends on $\theta$.  We highlight a few examples. 

\begin{example}
$\nabla_{\bds v}$ or $\nabla_{\bds p}$ are not computed using the above formulae for the gradient. The above formulae for the gradient are only used for computing $\nabla_{\bds x}$.  For example, if $H$ is a scalar function, 
$$\nabla_{\bds p} H = \partial_{p_r} H \bds e_r + \partial_{p_\theta} H \bds e_\theta + \partial_{p_z}H \bds e_z$$
\end{example}

\begin{example}
Suppose that the components of the vector field $\bds A$ are independent of $\theta$.  This does not imply that the value of the vector field $\bds A$ as an element of $\mathbb{R}^3$ is independent of $\theta$.  Indeed, using $\bds A = A_r \bds e_r + A_\theta \bds e_\theta + A_z \bds e_z$ and the assumption that $\partial_\theta A_r=0$, $\partial_\theta A_\theta=0$, $\partial_\theta A_z=0$ together with $\partial_\theta \bds e_r = \bds e_\theta$, $\partial_\theta \bds e_\theta = -\bds e_r$, we obtain
$$\partial_\theta \bds A = - A_\theta \bds e_r + A_r \bds e_\theta  $$
Thus we can still have $\partial_\theta \bds A \neq \bds 0$ in this situation.  This is geometrically clear -- as we walk around the cylinder at fixed radius, the direction of $\bds A$ has to change in accordance with the cylindrical basis.  For example, if $\bds A$ is tangent to the cylinder, then to remain tangent as we walk, the direction of $\bds A$ must change.
\end{example}

\begin{example}
When we decompose $\bds v = v_r \bds e_r + v_\theta \bds e_\theta + v_z \bds e_z$,  both $v_r$ and $v_\theta$ depend on $\theta$ and we have
$$\partial_\theta v_r = v_\theta \,, \qquad \partial_\theta v_\theta = -v_r$$
Suppose that $f(r,z, v_r,v_\theta,v_z)$ has no explicit dependence on $\theta$, but as a function of $v_r$ and $v_\theta$, has \emph{implicit} dependence on $\theta$.  The notation $\partial_\theta f$ is ambiguous -- if it refers to the explicit dependence on $\theta$, it is zero, but if it accounts for the implicit dependence, it is
\begin{equation}
\label{E:f-phi}
\partial_\theta [f (r,z, v_r, v_\theta,v_z)] = \partial_{v_r} f  \frac{\partial v_r}{\partial \theta} + \partial_{v_\theta} f \frac{\partial v_\theta}{\partial \theta}  = v_\theta \partial_{v_r} f - v_r \partial_{v_\theta} f
\end{equation}
In the case when $f$ has no explicit dependence on $\theta$, $z$, the charge density $\rho$  and the \emph{components} $j_r$ , $j_\theta$, $j_z$ of the current density depend only on $r$.  We have the important formula for the time rate of change of charge:
\begin{equation}
\label{E:dt_charge}
\partial_t \rho  = - \frac1r \, \partial_r \, r \, j_r 
\end{equation}
In order to correctly derive this directly in cylindrical coordinates, one needs to use \eqref{E:grad-cyl} to obtain
$$\bds v \cdot \nabla_x f = v_r \partial_r f + \frac{v_\theta}{r} \partial_\theta f$$
and substitute \eqref{E:f-phi}.  The equation \eqref{E:dt_charge} can also be derived by using that $\partial_t \rho = - \nabla_{\bds x} \cdot \bds j$ (which is most easily derived in Euclidean coordinates) and \emph{then} passing to cylindrical coordinates using \eqref{E:div-cyl}.
\end{example}

\begin{example}
In the Hamiltonian setting, the vector $\bds p$ is independent of the vector $\bds x$ and hence the Euclidean coordinates $p_x$, $p_y$, $p_z$  of the vector $\bds p$ are independent of $\theta$.  However, when decomposing $\bds p = p_r \bds e_r + p_\theta \bds e_\theta + p_z \bds e_z$, we have 
\begin{equation}
\label{E:dphi-p}
\partial_\theta p_r = p_\theta \text{ and } \partial_\theta p_\theta = -p_r\,.
\end{equation}
This follows from the fact that
$$p_r = p_x \cos \theta + p_y \sin \theta \,, \qquad p_\theta = -p_x\sin \theta + p_y \cos\theta $$
and the fact that $p_x$, $p_y$ are independent of $\theta$.  
\end{example}

In this paper, we assume the scalar field $\phi$ and the \emph{cylindrical components} $A_r$, $A_\theta$, and $A_z$ are independent of $\theta$ and $z$.   As emphasized above, this does not imply that the vector field $\bds A$ is independent of $\theta$, due to the movement of the frame.  We will call this the \emph{cylindrically and vertically symmetric} case, and in this setting, the formulae \eqref{E:grad-cyl}, \eqref{E:lap-cyl} simplify to:
$$\nabla \phi = \partial_r \phi \, \bds e_r \,, \qquad \Delta \phi = \frac1r\partial_r (r\partial_r \phi) $$
The formula \eqref{E:curl-cyl} simplifies to 
\begin{equation}\label{E:curlA}
\bds B = \nabla \times \bds A =  - \partial_r A_z \, \bds e_\theta + \frac1r\partial_r (rA_\theta) \,\bds e_z
\end{equation}
and thus automatically $B_r=0$.   We see that we might as well take $A_r=0$ since $A_r$ plays no role in the form of $\bds B$, and then the condition $\nabla \cdot \bds A=0$ follows from \eqref{E:div-cyl}.   Also, we obtain 
\begin{equation}
\label{E:ax_vert_BfromA}
E_r=-\partial_r \phi \,, \qquad B_\theta = -\partial_r A_z \,, \qquad B_z=  \frac{1}{r} \partial_r (r A_\theta)
\end{equation}
Finally, using \eqref{E:vec-lap-cyl}, we have (recall $A_r=0$)
\begin{align*}
\Delta \bds A &= (\frac1r \partial_r r \partial_r A_r - \frac{A_r}{r^2} ) \bds e_r + (\frac1r \partial_r r \partial_r A_\theta - \frac{A_\theta}{r^2} ) \bds e_\theta + (\frac1r \partial_r r \partial_r A_z) \bds e_z \\
&=    (\partial_r \frac1r \partial_r r  A_\theta)  \bds e_\theta + (\frac1r \partial_r r \partial_r A_z )\bds e_z
\end{align*}

Thus, in the cylindrically and vertically symmetric case, the VM equations \eqref{E:Vlasov}, \eqref{E:Maxwell}, \eqref{E:charge_current} take the form
\begin{equation}
\label{E:ax_vert_Maxwell}
\begin{aligned}
&E_r = \epsilon_0^{-1} \, \frac1r \, \partial_r^{-1} \, r \rho 
& \qquad & \\
& \partial_t A_\theta = - E_\theta  
& \qquad 
& \mu_0 \epsilon_0 \partial_t E_\theta = - \mu_0 j_\theta - \partial_r  \frac1r  \, \partial_r \, r \, A_\theta \\
& \partial_t A_z = -E_z 
& \qquad
& \mu_0 \epsilon_0 \partial_t E_z = - \mu_0 j_z - \frac1r \, \partial_r \, r \, \partial_r \, A_z
\end{aligned}
\end{equation}
We have removed the equation $\mu_0 \epsilon_0 \partial_t E_r = - \mu_0 j_r$ from the list since it can be derived from $E_r = \epsilon_0^{-1} \, \frac1r \, \partial_r^{-1} \, r \rho $ using \eqref{E:dt_charge}.  We can recover $B_\theta$, and $B_z$ from \eqref{E:ax_vert_BfromA}.

Also, the Vlasov transport equation \eqref{E:Vlasov}, in this context, using \eqref{E:grad-cyl} and $B_r=0$, takes the form
\begin{equation}
\label{E:ax_vert_Vlasov}
\begin{aligned}
0 = &\;\partial_t f^\pm + v_r \partial_r f^\pm + v_\theta \frac{1}{r} \partial_\theta f^\pm + \frac{q^\pm}{m^\pm}( E_r + v_\theta B_z - v_z B_\theta) \partial_{v_r} f^\pm \\
&+ \frac{q^\pm}{m^\pm} ( E_\theta - v_r B_z) \partial_{v_\theta} f^\pm + \frac{q^\pm}{m^\pm} (E_z + v_r B_\theta) \partial_{v_z} f^\pm 
\end{aligned}
\end{equation}
where we recall $\partial_\theta f^\pm$ is given by \eqref{E:f-phi} and we have the ability to rewrite $B_\theta$, $B_z$ in terms of $A_\theta$, $A_z$ via \eqref{E:ax_vert_BfromA}.

\section{General form for cylindrically and vertically symmetric equilibria \label{S:equil}}

Equilibria are studied using the particle trajectories, which are characteristics of the Vlasov components.  Let $\bds x^\pm(t)$ and $\bds v^\pm(t)$ be solutions of the ODEs describing classical motion of a particle with charge $q^\pm$ in the  electromagnetic field
$$\dot{\bds x}^\pm = \bds v^\pm \,, \qquad \dot{\bds v}^\pm = \frac{q^\pm}{m^\pm}( \bds E(\bds x^\pm) + \bds v^\pm \times \bds B(\bds x^\pm))$$
The energy and momentum are
$$e^\pm (\bds x^\pm, \bds v^\pm) = \frac12m^\pm |\bds v^\pm|^2 + q^\pm \phi(\bds x^\pm) \,, \qquad \bds p^\pm(\bds x^\pm, \bds v^\pm) = m^\pm \bds v^\pm + q^\pm \bds A(\bds x^\pm)$$
It can be checked that the energy $e^\pm(\bds x^\pm, \bds v^\pm)$ is conserved in time.  To obtain the Hamiltonian form, define the Hamiltonian $H^\pm(\bds x^\pm, \bds p^\pm)$ to be energy with $\bds v^\pm$ replaced in terms of $\bds p^\pm$
$$H^\pm (\bds x^\pm, \bds p^\pm) = \frac{|\bds p^\pm - q^\pm \bds A(\bds x^\pm)|^2}{2m^\pm} + q^\pm \phi(\bds x^\pm)$$
Then the ODEs become
$$\dot{\bds x}^\pm = \nabla_{\bds p}H^\pm(\bds x^\pm, \bds p^\pm) \,, \qquad \dot{\bds p}^\pm = -\nabla_{\bds x} H^\pm(\bds x^\pm, \bds p^\pm)$$
An immediate consequence is that if $\phi$ and $\bds A$ are independent of $z$, then $H^\pm(\bds x^\pm, \bds p^\pm)$ are independent of $z$, and thus $\dot p_z^\pm=0$, i.e. $p_z^\pm$ is conserved.

Define angular momentum $\bds \ell^\pm$ as 
$$\bds \ell^\pm(\bds x^\pm, \bds v^\pm) = \bds x^\pm \times \bds p^\pm$$
If $\phi$ and the components of $\bds A$, namely $A_r$, $A_\theta$, and $A_z$ are independent of $\theta$, then $\ell_z^\pm$ is conserved.  This is nicely explained by Noether's theorem, but also verified by direct computation.  Note that
$$\ell_z^\pm = r p_\theta^\pm = r (m^\pm v_\theta + q^\pm A_\theta(\bds x^\pm))$$

\begin{example}
We show that $\ell_z$ is conserved if $\phi$ and the cylindrical components $A_r$, $A_\theta$, and $A_z$ of the vector field $\bds A$ are independent of $\theta$, via Noether's Theorem.  We need two Hamiltonians -- $H(\bds x, \bds p)$ corresponding to particle dynamics in the field, and 
$$L_z(\bds x, \bds p) = xp_y-yp_x$$
Noether's Theorem states that $H$ is invariant under the $L_z$-flow if and only $L_z$ is invariant under the $H$ flow.  We check the former by calculating the $L_z$-flow
$$\dot {\bds x} = \nabla_{\bds p} L_z \,, \qquad \dot{\bds p} = -\nabla_{\bds x} L_z$$
This gives the equations
\begin{align*}
&\dot x = -y && \dot p_x = -p_y \\
&\dot y = x && \dot p_y = p_x
\end{align*}
For convenience, we assume that $(x_0,y_0)=(r,0)$.  Then $x(t)= r\cos t$, $y(t)=r\sin t$, from which we conclude that $r(t)=r$ and $\theta(t)=t$.  Also
$$\begin{bmatrix} p_x(t) \\ p_y(t) \end{bmatrix} = \begin{bmatrix} \cos t & -\sin t \\ \sin t & \cos t \end{bmatrix} \begin{bmatrix} (p_x)_0 \\ (p_y)_0 \end{bmatrix} = \begin{bmatrix} \cos \theta & -\sin \theta \\ \sin \theta & \cos \theta \end{bmatrix} \begin{bmatrix} (p_x)_0 \\ (p_y)_0 \end{bmatrix}$$
Since $(p_x,p_y)$ are related to $(p_r,p_\theta)$ by the same equation
$$\begin{bmatrix} p_x \\ p_y \end{bmatrix} = \begin{bmatrix} \cos \theta & -\sin \theta \\ \sin \theta & \cos \theta \end{bmatrix} \begin{bmatrix} p_r \\ p_\theta \end{bmatrix}$$
we obtain that $p_r(t)=(p_x)_0$ and $p_\theta(t) = (p_y)_0$ are both constant.  In summary, all three of $r(t)$, $p_r(t)$, and $p_\theta(t)$ are constant under this flow, while $\theta(t)=t$. (This can also be obtained from \eqref{E:Ham-cyl} below with $H$ replaced by $L_z$ although we must take care to use \eqref{E:dphi-p}).
Since
$$H = \frac{(p_r-qA_r)^2}{2m} + \frac{(p_\theta - qA_\theta)^2}{2m} + \frac{(p_z-qA_z)^2}{2m}+q\phi$$
is an expression that is independent of $\theta$, $H$ is constant under this flow.  
\end{example}

\begin{example}
We show that $\ell_z$ is conserved if $\phi$ and the cylindrical component $A_r$, $A_\theta$, and $A_z$ of the vector field $\bds A$  are independent of $\theta$, via direct calculation in cylindrical coordinates, using \eqref{E:dphi-p}.  Since $\bds x = r \bds e_r + z \bds e_z$, $\dot{\bds x} = \dot r \bds e_r + r\dot\theta \bds e_\theta + \dot z \bds e_z$.  Since $\bds p = p_r \bds e_r + p_\theta \bds e_\theta + p_z \bds p_z$, we have $\dot{\bds p} = (\dot p_r - \dot \theta p_\theta) \bds e_r + (\dot p_\theta + \dot \theta p_r) \bds e_\theta + \dot p_z \bds e_z $.  Via \eqref{E:grad-cyl},
$$\nabla_{\bds x} H = \partial_r H \, \bds e_r + \frac1r \partial_\theta H \, \bds e_\theta + \partial_z H \, \bds e_z$$
but for the $\bds p$ gradient, we simply project onto the orthonormal basis:
$$\nabla_{\bds p} H = \partial_{p_r} H \, \bds e_r + \partial_{p_\theta} H \, \bds e_\theta + \partial_{p_z} H \, \bds e_z $$
Therefore Hamilton's equations in cylindrical coordinates are:
\begin{equation}\label{E:Ham-cyl}
\begin{aligned}
&\dot r = \partial_{p_r} H && \dot p_r - \dot \theta p_\theta = - \partial_r H \\
&r\dot \theta = \partial_{p_\theta} H &\qquad \qquad & \dot p_\theta + \dot \theta p_r = - \frac1r \partial_\theta H \\
&\dot z = \partial_{p_z} H && \dot p_z = -\partial_zH
\end{aligned}
\end{equation}
Thus
\begin{equation}\label{E:lzdot}
\dot \ell_z = \dot r p_\theta + r \dot p_\theta = p_\theta \partial_{p_r} H - p_r \partial_{p_\theta} H - \partial_\theta H
\end{equation}
Now, even though $\phi$, $A_r$, $A_\theta$, and $A_z$ are all independent of $\theta$ and
$$H = \frac{(p_r-qA_r)^2}{2m} + \frac{(p_\theta -qA_\theta )^2}{2m} + \frac{(p_z-qA_z)^2}{2m} + q\phi  $$
we caution that $\partial_\theta H \neq 0$ due to the fact that $p_r$ and $p_\theta$ depend on $\theta$ -- see \eqref{E:dphi-p}.  We compute
$$\partial_{p_r} H = \frac{p_r-qA_r}{m} \,, \quad \partial_{p_\theta} H = \frac{p_\theta - q A_\theta}{m} \,, \qquad \partial_\theta H =  \frac{q}{m} ( p_rA_\theta - p_\theta A_r)$$
Plugging into \eqref{E:lzdot}, we obtain $0$.
\end{example}

Recall our assumptions that $\phi$ and the cylindrical components $A_r$, $A_\theta$, $A_z$ are independent of $z$ and $\theta$, which implies that $B_r=0$.   The equilibrium Maxwell equations take the form
\begin{equation}
\label{E:M-cyl}
\begin{aligned}
& - \frac1r \partial_r r \partial_r \phi = \epsilon_0^{-1} \rho &\qquad  \\
&  - \partial_r \frac1r \partial_r r   A_\theta = \mu_0 j_\theta \\
&- \frac1r \partial_r r \partial_r A_z = \mu_0 j_z 
\end{aligned}
\end{equation}
The Vlasov densities 
$$f^\pm = \mu^\pm( e^\pm, \ell_z^\pm, p_z^\pm)$$
Since $\ell_z^\pm$ and $p_z^\pm$ do not depend on $v_r$ and $e^\pm$ is even in $v_r$, it follows that $\bds j_r=0$, and hence we can take $A_r=0$.  From \eqref{E:curlA}, $B_\theta= - \partial_r A_z$ and $B_z = \frac1r\partial_r (rA_\theta)$.   

We now consider two special cases:  In \S\ref{S:z-pinch}, we consider the special case $B_z=0$ equivalently $A_\theta=0$, and in \S\ref{S:theta-pinch}, we consider the special case $B_\theta=0$ equivalently $A_z=0$.

\section{$z$-pinch type equilibria}
\label{S:z-pinch}

In this section, we consider equilibria with $B_z=0$ equivalently $A_\theta=0$, that we call \emph{$z$-pinch type}.  Since $A_\theta=0$, there is no current in the $\theta$ direction, and hence we might as well further assume that $\mu^\pm(e^\pm, p_z^\pm)$ with no $\ell_z^\pm$ dependence\footnote{We could entertain the case in which the two species have cancelling $\theta$-current, but we have decided not to pursue this here.}.   We show that \eqref{E:M-cyl} becomes a coupled system of second-order ODEs  for $\phi$ and $A_z$, with dependent variable $r$.

 Since $e$, $p_z$ depend only on $\alpha = \sqrt{v_r^2+v_\theta^2}$ and $v_z$ we first change to ``polar coordinates'':
\begin{equation}
\label{E:jz}
j_z(\phi,A_z) = \sum_{\pm} 2\pi q^\pm \int_{\alpha=0}^\infty \int_{v_z=-\infty}^{+\infty}  v_z \mu^\pm( e^\pm(\alpha, v_z, r), p_z^\pm (v_z,r)) \, \alpha d\alpha dv_z
\end{equation}

\begin{remark}
It might seem expedient to change integration variables from $(\alpha, v_z)$ to $(e, p_z)$.  Since $e = \frac12 m(\alpha^2+v_z^2) + q\phi(r)$ and $p_z = mv_z + qA_z(r)$, the Jacobian is
 $$\begin{vmatrix} \frac{\partial e}{\partial \alpha} & \frac{\partial e}{\partial v_z} \\ \frac{\partial p_z}{\partial \alpha} & \frac{\partial p_z}{\partial v_z} \end{vmatrix} = \begin{vmatrix} m\alpha & mv_z \\ 0 & m \end{vmatrix} = m^2\alpha$$
 Thus
 $$j_z(\theta, A_z) = \sum_{\pm} \frac{2\pi q^\pm}{(m^\pm)^3} \int_{e=q^\pm \phi(r)}^{+\infty} \int_{|p_z-a^\pm A_z(r)|\leq \sqrt{2m^\pm(e-q^\pm \phi(r))}} (p_z - q^\pm A_z(r)) \mu^\pm(e, p_z) \, dp_z\, de$$
 However, as we can see, the integration domain depends on $\phi(r)$ and $A_z(r)$ in a complicated way.  Therefore, we will prefer to use \eqref{E:jz} directly.
\end{remark}

Similar to \eqref{E:jz}, we find the following formula for the charge density $\rho$
\begin{equation}
\label{E:rho}
\rho(\phi, A_z) =  \sum_\pm 2\pi q^\pm \int_{\alpha=0}^\infty \int_{v_z=-\infty}^{+\infty}   \mu^\pm( e^\pm(\alpha, v_z, r), p_z^\pm (v_z,r)) \, \alpha d\alpha dv_z
\end{equation}

The equations that we need to solve are given in \eqref{E:M-cyl}.

\subsection{Constant drift $z$-pinch\label{SS:z-const}}

In this section, we prove Theorem \ref{T:z-pinch-const}, which provides an equilibrium profile \eqref{E:z-mu-pm} for which the corresponding field equation for $A_z$ reduces to \eqref{E:zp07}.

Suppose that
\begin{equation}\label{E:z-mu-pm}
\mu^\pm(e^\pm, p_z^\pm) = \frac{\omega^\pm}{\beta^2\zeta (\sigma^\pm)^3} \exp \left( \frac{- (\gamma^\pm)^2 e^\pm}{m^\pm (\sigma^\pm)^2} + \frac{ \gamma^\pm  p_z^\pm}{m^\pm \sigma^\pm} \right)
\end{equation}
where $\omega^\pm > 0$ and  $\gamma^\pm\in \mathbb{R}$ are dimensionless constants; $\gamma^\pm$  controls the $z$-velocity drift of $\gamma^\pm \sigma^\pm$.   We start by assuming $\phi=0$, i.e. electrical neutrality, which forces charge neutrality, $\rho=0$.   Plug in $e^\pm= \frac12m^\pm(\alpha^2 + v_z^2)$ (assuming $\phi=0$) and $p_z^\pm= m^\pm v_z + q^\pm A_z$, recalling $\alpha^2 = v_r^2 + v_\theta^2$.  For convenience, we write the expression in terms of the dimensionless quantities
$$\tilde \alpha^\pm  = \frac{\gamma^\pm \alpha}{\sigma^\pm} \,, \qquad \tilde v_z^\pm = \frac{\gamma^\pm v_z}{\sigma^\pm} \,, \qquad \tilde A_z^\pm = \frac{\gamma^\pm q^\pm A_z}{m^\pm \sigma^\pm}$$
(The $\pm$ superscripts on $\tilde v_z$, $\tilde \alpha$ are dropped below for readability.)  The result is that the exponent in \eqref{E:z-mu-pm} can be re-expressed as:
\begin{equation}
\label{E:zp05}
\begin{aligned}
\frac{- (\gamma^\pm)^2 e^\pm}{m^\pm (\sigma^\pm)^2} + \frac{ \gamma^\pm  p_z^\pm}{m^\pm \sigma^\pm} 
&= -\tfrac12 \tilde \alpha^2 - \tfrac12 \tilde v_z^2 +\tilde v_z + \tilde A_z^\pm \\
&= -\tfrac12 \tilde \alpha^2 -\tfrac12 (\tilde v_z - 1)^2 +\tfrac12  + \tilde A_z^\pm
\end{aligned}
\end{equation}
From \eqref{E:zp05}, \eqref{E:z-mu-pm}, and \eqref{E:rho}, we have
\begin{equation}\label{E:zp01}
\begin{aligned}
\rho(A_z) &= \sum_{\pm} \frac{2\pi q^\pm \omega^\pm}{\beta^2\zeta\, |\gamma^\pm|^3} e^{1/2} e^{\tilde A_z^\pm} \int_{\tilde \alpha=0}^{+\infty} e^{-\tilde\alpha^2/2} \tilde \alpha \, d\tilde \alpha \int_{\tilde v_z = -\infty}^{\infty} e^{-(\tilde v_z - 1)^2/2} \, d\tilde v_z\\
&= \frac{(2\pi)^{3/2} \,e^{1/2}}{\beta^2\zeta} \sum_{\pm} \frac{q^\pm \omega^\pm}{|\gamma^\pm|^3} \exp\left( \frac{\gamma^\pm q^\pm}{m^\pm \sigma^\pm} A_z \right)
\end{aligned}
\end{equation}
Also from \eqref{E:zp05}, \eqref{E:z-mu-pm}, and \eqref{E:jz}
\begin{equation}\label{E:zp04}
\begin{aligned}
j_z(A_z) &= \frac{ 2\pi \, e^{1/2}}{\beta^2 \zeta} \sum_{\pm} \frac{ q^\pm \, \omega^\pm \, \sigma^\pm }{|\gamma^\pm|^3\, \gamma^\pm} e^{\tilde A_z^\pm} \int_{\tilde \alpha=0}^{+\infty} e^{-\tilde\alpha^2/2} \tilde \alpha \, d\tilde \alpha \int_{\tilde v_z = -\infty}^{\infty} \tilde v_z e^{-(\tilde v_z - 1)^2/2} \, d\tilde v_z\\
&= \frac{ (2\pi)^{3/2} \, e^{1/2} }{ \beta^2 \, \zeta } \sum_{\pm} \frac{ q^\pm \,\omega^\pm \,\sigma^\pm}{|\gamma^\pm|^3\, \gamma^\pm } \exp\left( \frac{\gamma^\pm q^\pm}{m^\pm \sigma^\pm} A_z \right)
\end{aligned}
\end{equation}
By \eqref{E:zp01}, the condition $\rho(A_z)=0$ forces 
\begin{equation}\label{E:zp02}
\frac{\gamma^+ q^+}{m^+\sigma^+} = \frac{\gamma^- q^-}{m^-\sigma^-} \,, \qquad \frac{q^+\omega^+}{|\gamma^+|^3}  = - \frac{q^- \omega^-}{|\gamma^-|^3}
\end{equation}
Note that condition \eqref{E:zp02} implies that $\gamma^+$ and $\gamma^-$ have opposite sign.  In place of $\gamma^\pm$ and $\omega^\pm$ it is easier to express $j_z(A_z)$ in terms of
$$\lambda \defeq  \frac{q^+\gamma^+}{m^+\sigma^+} = \frac{q^-\gamma^-}{m^-\sigma^-}\,, \qquad a \defeq \frac{q^+\omega^+}{|\gamma^+|^3}  = - \frac{q^- \omega^-}{|\gamma^-|^3}$$
Now $-\infty<\lambda<\infty$ and $a>0$ can be thought of as two free parameters, and 
$$j_z(A_z) = \frac{ (2\pi)^{3/2} e^{1/2} a}{\beta^2 \, \zeta \, \lambda} \left( \sum_\pm \frac{|q^\pm|}{m^\pm} \right) e^{\lambda A_z}$$
 Let
$$\bar A_z = \lambda A_z \,, \qquad \bar r = \frac{r}{\beta}$$
so that the function $\bar A_z$ and the variable $\bar r$ are now dimensionless.  Also, define the dimensionless constant
$$d \defeq \frac{ (2\pi)^{3/2} e^{1/2} \, a \, \mu_0}{  \zeta} \left( \sum_\pm \frac{|q^\pm|}{m^\pm} \right) >0$$
Then \eqref{E:M-cyl} becomes
\begin{equation}\label{E:zp07}
\boxed{-\frac{1}{\bar r} \, \partial_{\bar r} \, \bar r \, \partial_{\bar r} \, \bar A_z = d \, e^{\bar A_z}}
\end{equation}
Remarkably, \eqref{E:zp07} has a Hamiltonian formulation. 
Change variable
$$\bar r = e^{ s}  \qquad \implies \qquad \frac{d}{ds} = \bar r \frac{d}{d\bar r}$$
Then $-\infty<s<\infty$ and \eqref{E:zp07} becomes
$$-\partial_s^2 \bar A_z = d  e^{\bar A_z + 2s}$$
One final adjustment, letting
$$\hat A_z = \bar A_z+ 2s + \ln d$$
reduces the problem to an autonomous ODE
$$-\partial_s^2 \hat A_z = e^{\hat A_z}$$
where $s$ and $\hat A_z$ are dimensionless.  This allows for easy analysis of the possible solutions of \eqref{E:zp07}.  However, as remarked in the introduction after the statement of Theorem \ref{T:z-pinch-const}, all solutions satisfying the physical boundary conditions at $\bar r=0$ and $\bar r \to +\infty$ are already given by the explicit formula \eqref{E:Bennett}.   Note that the statement of Theorem \ref{T:z-pinch-const} is obtained by reducing the above calculation to the case $\gamma^+=1$, $\gamma^-=-1$.

\subsection{Variable drift $z$-pinch\label{SS:var-z}}
In this section, we prove Theorem \ref{T:z-pinch-var}, which provides an equilibrium profile \eqref{E:modz-mu-pm} for which the corresponding field equation for $A_z$ is shown to reduce to \eqref{E:zp09} below.

We now consider
\begin{equation}\label{E:modz-mu-pm}
\mu^\pm(e^\pm, p_z^\pm) =  \frac{\omega^\pm}{\beta^2\zeta (\sigma^\pm)^3} \exp \left( -\frac{(\gamma^\pm)^2 e^\pm}{m^\pm (\sigma^\pm)^2}  - \frac{(\gamma^\pm)^2 (p_z^\pm)^2}{2(m^\pm)^2 (\sigma^\pm)^2}\right)
\end{equation}

\begin{remark}  We could consider a more general quadratic expression in the exponential:
$$
\mu^\pm(e^\pm, p_z^\pm) =  \frac{\omega^\pm}{\beta^2\zeta (\sigma^\pm)^3} \exp \left( -\frac{(\gamma^\pm)^2 e^\pm}{m^\pm (\sigma^\pm)^2} + B \frac{\gamma^\pm p_z^\pm}{m^\pm \sigma^\pm} - D\frac{(\gamma^\pm)^2 (p_z^\pm)^2}{2(m^\pm)^2 (\sigma^\pm)^2}\right)
$$
for $B\in \mathbb{R}$ and $D>0$ dimensionless constants, independent of the $\pm$ species.  However, we can absorb the linear term (with $B$ coefficient) as a shift of $A_z$, which has no effect on $\bds B$.   Then by replacing $\sqrt{D/(1+D)}A_z$ by $A_z$, we can effectively eliminate the this coefficient, burying its effect into the constant $d>0$ defined below.  So for simplicity, we take $B=0$ and $D=1$.
\end{remark}

Plug in $e^\pm= \frac12m^\pm(\alpha^2 + v_z^2)$ (assuming $\phi=0$) and $p_z^\pm= m^\pm v_z + q^\pm A_z$, recalling $\alpha^2 = v_r^2 + v_\theta^2$.  For convenience, we write the result in terms of the dimensionless quantities
$$\tilde \alpha^\pm  = \frac{\gamma^\pm \alpha}{\sigma^\pm} \,, \qquad \tilde v_z^\pm = \frac{\gamma^\pm v_z}{\sigma^\pm} \,, \qquad \tilde A_z^\pm = \frac{\gamma^\pm q^\pm A_z}{m^\pm \sigma^\pm}$$
The result is that the exponent in \eqref{E:modz-mu-pm} can be re-expressed upon completing the square in $v_z$ as
\begin{equation}
\label{E:cts}
 -\frac{(\gamma^\pm)^2 e^\pm}{m^\pm (\sigma^\pm)^2}  - \frac{(\gamma^\pm)^2 (p_z^\pm)^2}{2(m^\pm)^2 (\sigma^\pm)^2} = -\tfrac12 (\tilde \alpha^\pm)^2 - ( \tilde v_z^\pm + \tfrac12\tilde A_z^\pm )^2 - \tfrac14(\tilde A_z^\pm )^2 
\end{equation}
The $r$-dependence comes entirely through the $A_z(r)$ terms.  By \eqref{E:rho},
$$
 \rho(A_z) = \sum_{\pm} \frac{2\pi q^\pm \omega^\pm}{\beta^2\zeta \, |\gamma^\pm|^3} \int_{\tilde \alpha^\pm=0}^{+\infty} \int_{\tilde v_z^\pm =-\infty}^{+\infty} \exp(\cdots) \, \tilde \alpha^\pm d\tilde\alpha^\pm d\tilde v_z^\pm
$$
where $(\cdots)$ is replaced by the terms on the right side of \eqref{E:cts}.  The result after carrying out the integrals is:
\begin{equation}
\label{E:rho-zp2}
\begin{aligned}
\rho(A_z) &=  \frac{ (2\pi)^{3/2}}{\beta^2 \zeta} \sum_\pm \frac{q^\pm \omega^\pm}{|\gamma^\pm|^3} \exp \left[ -\frac14 \left( \frac{\gamma^\pm q^\pm A_z}{m^\pm \sigma^\pm} \right)^2 \right]
\end{aligned}
\end{equation}
By \eqref{E:M-cyl}, to achieve $\phi=0$, we need $\rho(A_z)=0$ for all $A_z$, so there must be $\pm$ independent constants $\lambda$ and $a$ so that
\begin{equation}
\lambda \defeq \frac{\gamma^\pm q^\pm}{m^\pm \sigma^\pm} \in \mathbb{R} \,, \qquad a = \frac{q^+ \omega^+}{|\gamma^+|^3} = -\frac{q^- \omega^+}{|\gamma^+|^3} >0
\end{equation}
(recall that $q+>0$ and $q^-<0$). By \eqref{E:jz},
$$
 j_z(A_z) = \sum_{\pm} \frac{2\pi q^\pm \omega^\pm \sigma^\pm }{\beta^2\zeta \, |\gamma^\pm|^3 \gamma^\pm } \int_{\tilde \alpha^\pm=0}^{+\infty} \int_{\tilde v_z^\pm =-\infty}^{+\infty} \tilde v_z \exp(\cdots) \, \tilde \alpha^\pm d\tilde\alpha^\pm d\tilde v_z^\pm
$$
Utilizing \eqref{E:cts} for the quantity in parentheses,
$$j_z(A_z) =   -\frac{ (2\pi)^{3/2} }{2 \beta^2 \zeta} \sum_\pm \frac{q^\pm \omega^\pm \sigma^\pm }{|\gamma^\pm|^3 \gamma^\pm } \left( \frac{\gamma^\pm q^\pm A_z}{m^\pm \sigma^\pm}  \right) \exp \left[ -\frac14 \left( \frac{\gamma^\pm q^\pm A_z}{m^\pm \sigma^\pm} \right)^2 \right]$$
$$ = -\frac{ (2\pi)^{3/2}  a}{2\beta^2 \zeta \lambda} \left( \sum_\pm \frac{q^\pm}{m^\pm} \right) \lambda A_z  \exp[ -\tfrac14\lambda^2 A^2]$$
By \eqref{E:M-cyl}, we need to consider the solutions to
\begin{equation}
\label{E:zp08}
-\partial_r^2 A_z - \frac1r \partial_r A_z = \mu_0 j_z(A_z)
\end{equation}
Let
$$\bar A_z \defeq \frac{\lambda A_z }{\sqrt 2} \,, \qquad \bar r \defeq \frac{r}{\beta}\,, \qquad d \defeq \frac{ (2\pi)^{3/2} \, a \, \mu_0}{2 \zeta } \left( \sum_\pm \frac{q^\pm}{m^\pm} \right) $$
so that $\bar A_z$ a dimensionless function, $\bar r$ is a dimensionless variable, and $d>0$ is a dimensionless constant determined from physical parameters as well as the structural input parameters $a$ and $\lambda$.  Then \eqref{E:zp08} becomes

\begin{equation}
\label{E:zp09}
-\partial_{\bar r}^2 \, \bar A_z - \frac1{\bar r} \, \partial_{\bar r} \, \bar A_z = - d \,\bar A_z \,e^{-\frac12\bar A_z^2}
\end{equation}
To obtain the statement of Theorem \ref{T:z-pinch-var}, take $\gamma^\pm =\pm 1$.

We can in fact dispose of the constant $d$ by replacing $r$ by $\sqrt d \, \bar r$, so in the following we shall take $d=1$ and study the equation
\begin{equation}
\label{E:mod-zp-model}
-\partial_r^2A - \frac1r \partial_r A = -A e^{-A^2/2}
\end{equation}

At first sight, we might hope for a solution that decays exponentially $A(r) \to 0$ as $r\to +\infty$, following the linear equation
$$-\partial_r^2 A - \frac1r \partial_r A = -A$$
which has as a solution the modified Bessel function of the second kind $K_0(r)$.   Indeed, such solutions exist, but at question is their behavior as $r \to 0^+$.  The mechanical analysis below shows that any solution to \eqref{E:mod-zp-model} that follows the exponentially decaying form of $K_0(r)$ as $r\to +\infty$ must in fact blow-up as $r\to 0^+$ (It cannot have $\partial_r A(r) \to 0$ as $r\to 0^+$, so cannot be smooth at $r=0$.)


Since we demand solutions that are regular at $r=0$, i.e. for which $\partial_r A(0)=0$ and $A(0)\neq 0$, we are forced to have $|A(r)| \to \infty$ as $r\to \infty$.  In this case, as $r\to +\infty$, \eqref{E:mod-zp-model} becomes the homogeneous equation
$$-\partial_r^2 A - \frac1r \partial_r A =0$$
which has solution $A(r) = d \ln r$ for some constant $d \neq 0$ (positive or negative).   Solutions with $A(r) \sim \ln r$ as $r\to +\infty$ are not completely unconfined since $B_\theta = - \partial_r A_z \sim \frac{1}{r}$, so that magnetic field just barely fails to have finite energy.

We present a mechanical analysis showing the impossibility of exponentially decaying solutions.
Consider \eqref{E:mod-zp-model}, i.e.
\begin{equation}
\label{E:modelODE}
-A'' - \frac1r A' = -A e^{-A^2/2}
\end{equation}
where ${}'$ denotes the $r$ derivative. Taking the ``mechanical energy''
\begin{equation}
\label{E:mech-en}
H[A,A'] \defeq \frac12 (A')^2 + e^{-A^2/2} \,,
\end{equation}
and use the shorthand $H(r) = H[A(r),A'(r)]$. Applying \eqref{E:modelODE}, we obtain
\begin{equation}
\label{E:mech-en-dis}
\partial_r H(r) = - \frac{1}{r} (A'(r))^2 <0
\end{equation}

Thus solutions lose energy as we evolve from ``time'' $r=0$ to $r=+\infty$.  We seek a solution satisfying
$$ \text{at }r=0 \,, \qquad A'(0)=0$$
$$\text{as }r\to +\infty, \qquad A(r) \to 0 \,, \quad A'(r) \to 0$$
Thus, as $r\to +\infty$, we will have $H(r) \to 1$, and by \eqref{E:mech-en-dis}, we must have $H(0)>1$.  But $A'(0)=0$, so \eqref{E:mech-en} gives a contradiction.

\section{$\theta$-pinch type equilibria}
\label{S:theta-pinch}

In this section, we consider equilibria with $B_\theta=0$ equivalently $A_z=0$, that we call \emph{$\theta$-pinch type}.  
This case gives a current in the $\theta$ direction.  Since $A_z=0$, there is no current in the $z$-direction, and hence we might as well further assume that $\mu^\pm(e^\pm, \ell_z^\pm)$ with no $p_z^\pm$ dependence.   We show that \eqref{E:M-cyl} becomes a coupled system of second-order ODEs for $\phi$ and $r A_\theta$, with dependent variable $r$.

Since $e$, $\ell_z$ depend only on $\alpha = \sqrt{v_r^2+v_z^2}$ and $v_\theta$ we first change to ``polar coordinates'':
\begin{equation}
\label{E:jphi}
j_\theta(\phi,rA_\theta) = \sum_{\pm} 2\pi q^\pm \int_{\alpha=0}^\infty \int_{v_\theta=-\infty}^{+\infty}  v_\theta \mu^\pm( e^\pm(\alpha, v_\theta, r), \ell_z^\pm (v_\theta,r)) \, \alpha d\alpha dv_\theta
\end{equation}

Similar to \eqref{E:jphi}, we find the following formula for the charge density $\rho$
\begin{equation}
\label{E:tp-rho}
\rho(\phi, rA_\theta) =  \sum_\pm 2\pi q^\pm \int_{\alpha=0}^\infty \int_{v_\theta=-\infty}^{+\infty}   \mu^\pm( e^\pm(\alpha, v_\theta, r), \ell_z^\pm (v_\theta,r)) \, \alpha d\alpha dv_\theta
\end{equation}

The equations that we need to solve are given in \eqref{E:M-cyl}.

\subsection{Variable drift $\theta$-pinch\label{SS:t-var}}

In this section, we prove Theorem \ref{T:theta-pinch-var} which provides an equilibrium profile \eqref{E:modt-mu-pm} and the scaled field equation is shown to satisfy \eqref{E:tp-scaled-field}.

Consider
\begin{equation}\label{E:modt-mu-pm}
\mu^\pm(e^\pm, \ell_z^\pm) =  \frac{\omega^\pm}{\beta^2\zeta (\sigma^\pm)^3} \exp \left( -\frac{(\gamma^\pm)^2 e^\pm}{m^\pm (\sigma^\pm)^2}  - \frac{(\gamma^\pm)^2 (\ell_z^\pm)^2}{2(m^\pm)^2 (\sigma^\pm)^2 \beta^2}\right)
\end{equation}
Here, $\gamma^\pm$, $\omega^\pm$ are dimensionless constants.

Plug in $e^\pm= \frac12m^\pm(\alpha^2 + v_\theta^2)$ (assuming $\phi=0$) and $\ell_z^\pm= r(m^\pm v_\theta + q^\pm A_\theta)$, recalling $\alpha^2 = v_r^2 + v_z^2$.  For convenience, we write the result in terms of the dimensionless quantities
$$\tilde \alpha^\pm  = \frac{\gamma^\pm \alpha}{\sigma^\pm} \,, \qquad \tilde v_\theta^\pm = \frac{\gamma^\pm v_\theta}{\sigma^\pm} \,, \qquad \tilde A_\theta^\pm = \frac{\gamma^\pm q^\pm A_\theta}{m^\pm \sigma^\pm} \,, \qquad \bar r = \frac{r}{\beta}$$
The result is that the exponent in \eqref{E:modt-mu-pm} can be re-expressed upon completing the square in $v_\theta$ as
\begin{equation}
\label{E:tp-exp}
\begin{aligned} 
\indentalign -\frac{(\gamma^\pm)^2e^\pm}{m^\pm (\sigma^\pm)^2}  - \frac{(\gamma^\pm)^2 (\ell_z^\pm)^2}{2(m^\pm)^2 (\sigma^\pm)^2 \beta^2} \\
&= - \tfrac12 (\tilde \alpha^\pm )^2 - \tfrac12 (1 + \bar r^2) \left( \tilde v_\theta + \frac{\bar r^2 \tilde A_\theta}{1+\bar r^2} \right)^2 - \frac{(\bar r\tilde A_\theta )^2}{2(1+\bar r^2)} 
\end{aligned}
\end{equation}
From \eqref{E:tp-rho}, \eqref{E:modt-mu-pm}, and \eqref{E:tp-exp},
$$\rho(A_\theta) =  \sum_\pm \frac{(2\pi)^{3/2} q^\pm \omega^\pm}{\beta^2\zeta |\gamma^\pm|^3} \frac{1}{(1+\bar r^2)^{1/2}} \exp \left[ - \frac{ ( \bar r \tilde A_\theta^\pm )^2}{2(1+\bar r^2)} \right] $$

The condition $\rho(A_\theta)=0$ forces 
$$\lambda \defeq \frac{\gamma^+ q^+}{m^+\sigma^+} = \frac{\gamma^- q^-}{m^- \sigma^-} \,, \qquad
 a \defeq \frac{q^+ \omega^+}{|\gamma^+|^3} = \frac{-q^-\omega^-}{|\gamma^-|^3}>0$$
 Originally, there were four parameters $\gamma^\pm$, $\omega^\pm$, and these two equations reduce the problem to two degrees of freedom.  Note that $\gamma^+$ and $\gamma^-$ have opposite sign since $q^+$ and $q^-$ have opposite sign.  We prefer, in the subsequent analysis, to regard $\lambda$ and $a>0$ as given free parameters.  Then we can recover the four original parameters $\gamma^\pm$, $\omega^\pm$ as
$$\gamma^\pm = \lambda \frac{m^\pm\sigma^\pm}{q^\pm} \,, \qquad \omega^\pm = a|\lambda|^3 \frac{(m^\pm)^3(\sigma^\pm)^3}{(q^\pm)^4}$$
From \eqref{E:jphi}, \eqref{E:modt-mu-pm}, and \eqref{E:tp-exp},
\begin{align*}
j_\theta(A_\theta) &= - \sum_\pm \frac{ (2\pi)^{3/2} \, q^\pm  \omega^\pm \sigma^\pm }{  \, \beta^2 \zeta \, |\gamma^\pm|^3 \gamma^\pm} \exp\left[ - \frac{ (\bar r  \lambda A_\theta)^2}{2(1 + \bar r^2)} \right] \frac{ \bar r^2  \lambda A_\theta }{(1 + \bar r^2)^{3/2}} \\
& = - \frac{ (2\pi)^{3/2}  \, a}{\beta^2 \zeta \lambda} \left(\sum_\pm \frac{|q^\pm|}{m^\pm } \right) \exp\left[ - \frac{ ( \bar r \lambda A_\theta )^2}{2(1 + \bar r^2)} \right] \frac{ \bar r^2  \lambda A_\theta}{(1 + \bar r^2)^{3/2}}
\end{align*}
The corresponding Maxwell equation is
$$ -\partial_r \, \frac{1}{r} \, \partial_r \, r \,  A_\theta = \mu_o j_\theta$$
which leads to
$$ -\partial_{\bar r} \, \frac{1}{\bar r} \, \partial_{\bar r} \, ( \bar r \lambda  A_\theta) \\
= - \frac{ (2\pi)^{3/2}  \, a \, \mu_0}{\zeta } \left(\sum_\pm \frac{|q^\pm|}{m^\pm } \right) \exp\left[ - \frac{ ( \bar r \lambda A_\theta )^2}{2(1 + \bar r^2)} \right] \frac{ \bar r^2 \lambda A_\theta}{(1 + \bar r^2)^{3/2}}
$$
Define $\bar A_\theta$ by
$$\bar A_\theta = \lambda A_\theta  \,, \qquad d = \frac{ (2\pi)^{3/2} \, a \, \mu_0}{ \zeta } \sum_\pm \frac{|q^\pm|}{m^\pm } $$
The shift by a constant will not affect the magnetic field since $B_z=  \frac{1}{r} \partial_r (r A_\theta)$.  With this transformation, the equation takes the form
\begin{equation}
\label{E:tp-scaled-field}
\Big(- \partial_{\bar r}^2 - \frac{1}{\bar r} \partial_{\bar r} + \frac{1}{\bar r^2} \Big)\bar A_\theta = -\partial_{\bar r} \, \frac{1}{\bar r} \, \partial_{\bar r} \,  \bar r A_\theta = - d \frac{ \bar r^2 \bar A_\theta}{(1 + \bar r^2)^{3/2}}\exp\left[ - \frac{ \bar r^2 \bar A_\theta^2}{2(1 + \bar r^2)} \right] 
\end{equation}
We obtain Theorem \ref{T:theta-pinch-var} by taking $\gamma^\pm=\pm 1$.
Let us investigate the asymptotics of solutions to \eqref{E:tp-scaled-field} as $\bar r \to +\infty$.  
At this point, we drop the bars in the notation. We consider the $r\to \infty$, $A(r) \to 0$ limit, so we drop the exponential term.  Let
$$s = \sqrt{2r} \,, \qquad \frac{d}{dr} = \frac{1}{s} \frac{d}{ds}$$
Then the above equation converts to 
$$ \Big(- \frac{d^2}{ds^2} - \frac{1}{s} \frac{d}{ds} + 2d + 4s^{-2}\Big) A =0$$
This has exponentially decaying solution 
$$A(s) = K_2( \sqrt{2d}\, s) \sim \frac{\pi}{2} (\sqrt{2d}\, s)^{-1/2} e^{-\sqrt{2d} \,s}$$
However, no solution of \eqref{E:tp-scaled-field} with this exponential decay as $r\to +\infty$ can have a finite limit as $r\searrow 0$.  As remarked in the introduction, the only physical solutions to \eqref{E:tp-scaled-field} satisfy $\bar A_\theta(\bar r) \sim r$ as $\bar r \to +\infty$.  

\section{Screw pinch type equilibria\label{S:screw}}

Recall that $0\leq z < \zeta$ with periodic boundary conditions connecting $z=0$ and $z=\zeta$.  Employ cylindrical coordinates and fix a pitch parameter $k\in \zeta \mathbb{Z}/2\pi$.  Consider helical translation parameterized by $\eta \in \mathbb{R}$, 
$$(r_0, \theta_0, z_0) \mapsto (r_0, \theta_0+ \eta \mod 2\pi, z_0+k\eta \mod \zeta)$$
We would like to consider this as ``flow in the $\eta$-direction'' so we work with spatial coordinates $(r,\eta, \sigma)$ where
\begin{equation}\label{E:sp01}
\eta = \theta \mod 2\pi \,, \qquad \sigma = z - k \theta \mod \zeta
\end{equation}
so the case $k=0$ reduces to $\eta=\theta$ and $\sigma=z$. 
With these coordinates,
$$\partial_\theta = \partial_\eta - k \partial_\sigma \,, \qquad \partial_z = \partial_\sigma$$
equivalently,
$$\partial_\eta = \partial_\theta + k \partial_z \,, \qquad \partial_\sigma =  \partial_z$$
 Now $\partial_\eta$ is the Killing field generating the symmetry of \emph{independence of $\eta$}, i.e.
\begin{enumerate}[left=0pt,label=$\bullet$]
\item $f$ depends only on $(r,\sigma)$
\item The cylindrical components $E_r$, $E_\theta$, $E_z$ of $\bds E$, and the cylindrical components $B_r$, $B_\theta$, and $B_z$  of $\bds B$, depend only on $(r,\sigma)$.
\end{enumerate}

The particle trajectory invariant is $\ell_z + kp_z = r(mv_\theta+qA_\theta) + k(mv_z + qA_z)$, and this suggests the following orthonormal frame on \emph{tangent spaces}:
\begin{equation}
\label{E:sp02}
\bds e_\eta = \frac{r \bds e_\theta + k \bds e_z}{\sqrt{r^2+k^2}} \,, \qquad \bds e_\sigma = \frac{-k\bds e_\theta + r \bds e_z}{\sqrt{r^2+k^2}}
\end{equation}
(Note that $\partial_\eta$ is parallel to $\bds e_\eta$ but $\partial_\sigma$ is not parallel to $\bds e_\sigma$.)  In this frame, writing $v_\theta \bds e_\theta + v_z \bds e_z = v_\eta \bds e_\eta + v_\sigma \bds e_\sigma$ yields the coordinate relations
\begin{equation}
\label{E:sp03}
v_\theta = \frac{rv_\eta - k v_\sigma}{\sqrt{r^2+k^2}} \,, \qquad v_z = \frac{kv_\eta + r v_\sigma}{\sqrt{r^2+k^2}}
\end{equation}
equivalently,
$$v_\eta = \frac{rv_\theta+kv_z}{\sqrt{r^2+k^2}} \,, \qquad v_\sigma = \frac{-kv_\theta + r v_z}{\sqrt{r^2+k^2}}$$
Similarly, writing $A_\theta \bds e_\theta + A_z \bds e_z = A_\eta \bds e_\eta + A_\sigma \bds e_\sigma$ yields the coordinate relations
\begin{equation}
\label{E:sp04}
A_\theta = \frac{rA_\eta - k A_\sigma}{\sqrt{r^2+k^2}} \,, \qquad A_z = \frac{kA_\eta + r A_\sigma}{\sqrt{r^2+k^2}}
\end{equation}
equivalently,
$$A_\eta = \frac{rA_\theta+kA_z}{\sqrt{r^2+k^2}} \,, \qquad A_\sigma = \frac{-kA_\theta + r A_z}{\sqrt{r^2+k^2}}$$
The velocity decomposition \eqref{E:sp03} induces a current density decomposition with components $j_r$, $j_\eta$, $j_\sigma$ with, in particular
$$j_\eta  = \iiint_{v_r,v_\eta,v_\sigma} v_\eta f^\pm \, dv_r \, dv_\eta \, dv_\sigma \,, \quad j_\sigma  = \iiint_{v_r,v_\eta,v_\sigma} v_\sigma f^\pm \, dv_r \, dv_\eta \, dv_\sigma$$
The conversions \eqref{E:sp03} give
$$j_\theta = \frac{r}{\sqrt{r^2+k^2}} j_\eta - \frac{k}{\sqrt{r^2+k^2}} j_\sigma \,, \quad j_z = \frac{k}{\sqrt{r^2+k^2}} j_\eta + \frac{r}{\sqrt{r^2+k^2}} j_\sigma$$
In the equilibrium example below, $j_\sigma=0$, so in this case
\begin{equation}
\label{E:sp06}
j_\theta = \frac{r}{\sqrt{r^2+k^2}} \, j_\eta \,, \quad j_z = \frac{k}{\sqrt{r^2+k^2}} \, j_\eta \quad \implies \quad \boxed{kj_\theta = r j_z}
\end{equation}
Plugging \eqref{E:sp04}, \eqref{E:sp06} into the Amp\`ere equation \eqref{E:M-cyl},
\begin{equation}
\label{E:sp05}
\begin{aligned}
&  - \partial_r \, \frac1r \, \partial_r \, r\, \frac{rA_\eta - k A_\sigma}{\sqrt{r^2+k^2}}  = \mu_0 \frac{r}{\sqrt{r^2+k^2}} \, j_\eta\\
&- \frac1r \, \partial_r \, r \, \partial_r  \, \frac{kA_\eta + r A_\sigma}{\sqrt{r^2+k^2}} = \mu_0 \frac{k}{\sqrt{r^2+k^2}} \, j_\eta
\end{aligned}
\end{equation}
Multiply the first equation by $k$ and the second by $r$.  Then the two left-sides provide the relation:
$$k \,  \frac1r \, \partial_r \, r\, \frac{rA_\eta - k A_\sigma}{\sqrt{r^2+k^2}} =   \, r \, \partial_r  \, \frac{kA_\eta + r A_\sigma}{\sqrt{r^2+k^2}}$$
Using the commutator identity $\partial_r \, r^2 = r^2 \, \partial_r + 2r$ on the left side, 
$$k \, (r\partial_r + 2) \frac{A_\eta}{\sqrt{k^2+r^2}} - k^2 \, \frac{1}{r} \, \partial_r \, r \frac{A_\sigma}{\sqrt{k^2+r^2}} = k\, r \, \partial_r \, \frac{A_\eta}{\sqrt{k^2+r^2}} +  \, r \, \partial_r \, r \, \frac{A_\sigma}{\sqrt{k^2+r^2}}$$
Cancelling the two $k r \partial_r  A_\eta/\sqrt{k^2+r^2}$ terms, we obtain the key field component relationship
\begin{equation}
\label{E:AsigmaAeta}
\partial_r\frac{r \,A_\sigma}{\sqrt{k^2+r^2}} = \frac{2kr A_\eta}{(k^2+r^2)^{3/2}} \qquad (\text{when }j_\sigma=0)
\end{equation}
Reinserting this into \eqref{E:sp05}, both the first and second equations of \eqref{E:sp05} become the following reduced Amp\`ere equation in $A_\eta$:
\begin{equation}\label{E:sp08}
-\partial_r \, r \, \partial_r \, \frac{A_\eta}{\sqrt{k^2+r^2}} - 2\partial_r \frac{r^2 A_\eta}{(r^2+k^2)^{3/2}} = \mu_0 \frac{r}{\sqrt{r^2+k^2}} \, j_\eta
\end{equation}
Now we compute the induced relationship between $B_\theta$ and $B_z$ when $A_r=0$, $A_\theta$ and $A_z$ depend only on $r$,  under the assumption that $j_\sigma=0$ and hence  \eqref{E:AsigmaAeta} holds.  Inserting \eqref{E:sp04}, and using $\delta = \sqrt{r^2+k^2}$,
$$B_z = \frac1r \partial_r r A_\theta = \frac1r \partial_r \frac{r^2 A_\eta}{\delta} - \frac{k}{r} \partial_r \frac{rA_\sigma}{\delta} \,, \qquad B_\theta = - \partial_r A_z = -k\partial_r \frac{A_\eta}{\delta} - \partial_r \frac{rA_\sigma}{\delta}$$
Plugging in \eqref{E:AsigmaAeta},
$$B_z = \frac1r\partial_r \frac{ r^2A_\eta}{\delta} - \frac{2k^2A_\eta}{\delta^3}= r \partial_r \frac{A_\eta}{\delta} + 2 \frac{r^2 A_\eta}{\delta^3} \,, \qquad B_\theta = -k \partial_r \frac{A_\eta}{\delta} - \frac{2kr A_\eta}{\delta^3}$$
from which it follows that 
$$kB_z = -rB_\theta$$
Thus $\bds j$ is parallel to $\bds e_\eta$ and $\bds B$ is parallel to $\bds e_\sigma$.

\subsection{Variable drift screw pinch equilibrium\label{SS:s-var}}

In this subsection, we provide an equilibrium profile \eqref{E:modsp-mu-pm}. 

Consider
\begin{equation}\label{E:modsp-mu-pm}
\mu^\pm(e^\pm, \ell_z^\pm+kp_z^\pm) =  \frac{\omega^\pm}{\beta^2\zeta (\sigma^\pm)^3} \exp \left( -\frac{(\gamma^\pm)^2 e^\pm}{m^\pm (\sigma^\pm)^2}  - \frac{(\gamma^\pm)^2 (\ell_z^\pm+kp_z^\pm)^2}{2(m^\pm)^2 (\sigma^\pm)^2 \beta^2}\right)
\end{equation}
Here, $\gamma^\pm$, $\omega^\pm$ are dimensionless constants.

Plug in $e^\pm= \frac12m^\pm(v_r^2 + v_\theta^2+v_z^2)$ (assuming $\phi=0$) and $$\ell_z^\pm+kp_z^\pm = m^\pm (rv_\theta + kv_z) + q^\pm (rA_\theta+kA_z)\,.$$  
For convenience, introduce the dimensionless quantities
$$\tilde v_r^\pm = \frac{\gamma^\pm v_r}{\sigma^\pm} \,, \quad 
\tilde v_\theta^\pm = \frac{\gamma^\pm v_\theta}{\sigma^\pm} \,, \quad 
\tilde v_z^\pm = \frac{\gamma^\pm v_z}{\sigma^\pm} \,, \quad
\tilde A_\theta^\pm = \frac{\gamma^\pm q^\pm A_\theta}{m^\pm \sigma^\pm} \,, \quad 
\tilde A_z^\pm = \frac{\gamma^\pm q^\pm A_z}{m^\pm \sigma^\pm} $$
as well as
$$\bar r = \frac{r}{\beta} \,, \qquad \bar k = \frac{k}{\beta} \in \frac{ \zeta \, \mathbb{Z}}{\beta}\,, \qquad \delta(\bar r ) = \sqrt{\bar r^2+\bar k^2}$$
The conversion to $\bds e_\eta$, $\bds e_\sigma$ frame components is
$$ \tilde v_\eta^\pm = \frac{\bar r \tilde v_\theta^\pm + \bar k \tilde v_z^\pm}{\delta(\bar r)} \,, \quad
\tilde v_\sigma^\pm = \frac{ -\bar k \tilde v_\theta^\pm + \bar r \tilde v_z^\pm}{\delta(\bar r)} \,, \quad \tilde A_\eta^\pm = \frac{\bar r \tilde A_\theta^\pm + \bar k \tilde A_z^\pm}{\delta(\bar r)} \,, \quad
\tilde A_\sigma^\pm = \frac{ -\bar k \tilde A_\theta^\pm + \bar r \tilde A_z^\pm}{\delta(\bar r)}$$
The result is that the exponent in \eqref{E:modsp-mu-pm} can be re-expressed upon completing the square in $v_\theta$ as
\begin{equation}
\label{E:sp-exp}
\begin{aligned} 
\indentalign -\frac{(\gamma^\pm)^2e^\pm}{m^\pm (\sigma^\pm)^2}  - \frac{(\gamma^\pm)^2 (\ell_z^\pm+kp_z^\pm)^2}{2(m^\pm)^2 (\sigma^\pm)^2 \beta^2} \\
&= - \tfrac12 (\tilde v_r^\pm)^2 - \tfrac12  (\tilde v_\sigma^\pm)^2 -\tfrac12(1+\delta^2) \Big( \tilde v_\eta^\pm + \frac{\delta^2 \tilde A_\eta^\pm}{1+\delta^2}\Big)^2 - \frac{(\delta \tilde A_\eta^\pm )^2}{2(1+\delta^2)}
\end{aligned}
\end{equation}
From \eqref{E:modsp-mu-pm}, \eqref{E:sp-exp}, we obtain
$$\rho(A_\eta) = \frac{(2\pi)^{3/2}}{\beta^2 \zeta \sqrt{1+\delta^2}} \sum_\pm   \,  \frac{q^\pm \omega^\pm}{|\gamma^\pm|^3 } \, \exp\left[ - \frac{(\delta  \tilde A_\eta^\pm )^2}{2(1+\delta^2)}  \right]$$
In order to have $\rho(A_\eta)(r)=0$ for each $r$, we need $\pm$ independent constants $\lambda$ and $a$ defined as follows:
\begin{equation}\label{E:sp07}
\lambda = \frac{\gamma^\pm q^\pm}{m^\pm \sigma^\pm} \in \mathbb{R} \,, \qquad a = \frac{|q^\pm| \omega^\pm}{|\gamma^\pm|^3 }>0
\end{equation}
Alternatively, given constants $\lambda \in \mathbb{R}$ and $a>0$, we can \emph{define} the dimensionless constants
$$\gamma^\pm = \frac{\lambda m^\pm \sigma^\pm}{q^\pm}\in \mathbb{R} \,, \qquad \omega^\pm = \frac{|\lambda|^3 \, a \, (m^\pm)^3 (\sigma^\pm)^3}{(q^\pm)^4}>0$$
to be inserted in the form of \eqref{E:modsp-mu-pm}.  Now from \eqref{E:modsp-mu-pm}, \eqref{E:sp-exp} again, we obtain
$$j_\eta(A_\eta) = - \frac{(2\pi)^{3/2} }{\beta^2 \zeta  \, \sqrt{1+\delta^2}} \sum_\pm   \,  \frac{q^\pm \sigma^\pm \omega^\pm}{|\gamma^\pm|^3 \gamma^\pm } \, \frac{ \delta^2 \tilde A_\eta^\pm}{1+\delta^2} \exp\left[ - \frac{(\delta \tilde A_\eta^\pm)^2}{2(1+\delta^2)}  \right]$$
Using the notation of \eqref{E:sp07},
$$j_\eta(A_\eta) = - \left(\sum_\pm   \,  \frac{ |q^\pm|}{ m^\pm} \right) \frac{(2\pi)^{3/2} \, a }{\beta^2 \, \zeta \, \lambda} \, \frac{ \delta^2 \lambda A_\eta }{(1+\delta^2)^{3/2}} \exp\left[ - \frac{(\delta \lambda A_\eta)^2}{2(1+\delta^2)} \right]$$
Introduce $\bar A_\eta$ as follows:
$$  \lambda A_\eta  =  \bar A_\eta$$
Plugging this into \eqref{E:sp08},
\begin{equation}\label{E:sp09}
-\partial_{\bar r} \, \bar r \, \partial_{\bar r} \, \delta^{-1} \bar A_\eta - 2\partial_{\bar r} \, \delta^{-3} \bar r^2 \,\bar A_\eta = - d \, \frac{ \bar r \delta \bar A_\eta}{ (1+\delta^2)^{3/2}} \exp\left[ - \frac{\delta^2 \bar A_\eta^2}{2(1+\delta^2)}  \right] 
\end{equation}
where $d$ is the dimensionless constant
$$ d=  \frac{(2\pi)^{3/2}  \mu_0 \, a}{ \zeta} \left(\sum_\pm   \,  \frac{ |q^\pm|}{ m^\pm} \right) >0$$
Alternatively, the left side of \eqref{E:sp09} can be expanded as follows
\begin{equation}\label{E:sp10}
 -\partial_{\bar r}^2\bar A_\eta - \frac{1}{\bar r}\partial_{\bar r}\bar  A_\eta  + ( \frac{3\bar r^2}{\delta^4} - \frac{2}{\delta^2})\bar A_\eta =  - d \, \frac{  \delta^2 \bar A_\eta}{ (1+\delta^2)^{3/2}} \exp\left[ - \frac{\delta^2 \bar A_\eta^2}{2(1+\delta^2)} \right]
\end{equation}
We obtain Theorem \ref{T:screw-pinch-var} by taking $\gamma^\pm =\pm 1$.

\section{Inclusion of an external field\label{S:external}}

Given that there are no finite energy equilibria (see Appendix \ref{Appendix A}), we now consider the inclusion of an external field in the VM system \eqref{E:Vlasov}, \eqref{E:Maxwell}, \eqref{E:charge_current}.  The Vlasov equation \eqref{E:Vlasov} is modified to 
\begin{equation}
\label{E:VlasovExt}
\partial_t f^{\pm} + \bds v\cdot \nabla_{\bds x} f^\pm + \frac{q^\pm}{m^\pm}(\bds E +\bds E_\ext + \bds v \times (\bds B+\bds B_\ext)) \cdot \nabla_{\bds v} f^\pm = 0
\end{equation}
but the Maxwell equations are expressed entirely in terms of $\bds E$ and $\bds B$ (with no $\bds E_\ext$ or $\bds B_\ext$) exactly as in  \eqref{E:Maxwell}, and the charge and current densities are given by \eqref{E:charge_current}.  As in \eqref{E:potentials}, potentials $(\phi, \bds A)$ are introduced corresponding to $(\bds E, \bds B)$ and external potentials $(\phi_\ext,\bds A_\ext)$ corresponding to $(\bds E_\ext, \bds B_\ext)$.   

The implications for equilibria are the following.  The particle trajectories now include the external field components:
\begin{align*}
&e^\pm = \tfrac12 m^\pm (v_r^2 + v_\theta^2 + v_z^2) + q^\pm (\phi+\phi_\ext)\,, \\
&p_z^\pm = m^\pm v_z + q^\pm(A_z + A_{z,\ext}) \,, \\
&\ell_z^\pm = r(m^\pm v_\theta + q^\pm(A_\theta + A_{\theta,\ext}))
\end{align*}
Thus the equations for the equilibria in the $z$-pinch case are only altered in the following manner: $\rho(\phi,A_z)$ is replaced by $\rho(\phi+\phi_\ext, A_z + A_{z,\ext})$ and $j_z(\phi,A_z)$ is replaced by $j_z(\phi+\phi_\ext, A_z + A_{z,\ext})$.  We take $\phi_\ext=0$ and then arrange for $\phi=0$ and $\rho\equiv 0$ in the same way as above.  The current equation is only altered on the right-side -- the nonlinear term is evaluated at $A_z+A_{z,\ext}$ in place of $A_z$ alone.  The result, after the same rescaling, is 
\begin{equation}
\label{E:zp09f}
-\partial_{\bar r}^2 \, \bar A_z - \frac1{\bar r} \, \partial_{\bar r} \, \bar A_z = - d \,(\bar A_z+\bar A_{z,\ext}) \,e^{-\frac12(\bar A_z+\bar A_{z,\ext})^2}
\end{equation}
in place of \eqref{E:zp09}.  

We explain that, given a nontrivial choice of $\bar A_{z,\ext}$ (say, smooth with compact support), we expect that there exist $\bar A_z$ such that $(\bar A_z, \bar A_{z,\ext})$ solves \eqref{E:zp09f}.  To see this, let us drop the bar and $z$ subscript and set $d=1$ so that \eqref{E:zp09f} becomes
\begin{equation}
\label{E:zp10}
-\partial_{ r}^2 \,  A - \frac1{ r} \, \partial_{ r} \,  A = - ( A+ A_{\ext}) \,e^{-\frac12( A+ A_{\ext})^2}
\end{equation}
Now let $A_\tot = A+A_\ext$ so that \eqref{E:zp10} becomes
\begin{equation}
\label{E:zp11}
-\Delta A_\tot + A_\tot e^{-A_\tot^2/2} = - \Delta A_\ext
\end{equation}
where $\Delta$ is the 2D Laplacian. 
Taking the right side as a given forcing function, 
\eqref{E:zp11} becomes
\begin{equation}\label{E:zp12}
(I-\Delta) A_\tot  + \underbrace{A_\tot(e^{-A_\tot^2/2}-1)}_{\text{cubic+ term}} = -\Delta A_\ext 
\end{equation}
The kernel corresponding to $(I-\Delta)^{-1}$ has exponential decay and we can define a first approximation to $A_\tot$ as
$$A_\tot = (I-\Delta)^{-1} (-\Delta A_\ext)$$
To obtain an exact solution to \eqref{E:zp12}, we can proceed analytically assuming $f$ is small or numerically via an iteration procedure.  An analytical tool could be Lyapunov-Schmidt reduction or Crandall-Rabinowitz theory.  Numerically,
\begin{enumerate}[left=0pt,label=$\bullet$]
\item Petviashvili iteration 
\item a scheme in which the resolvent kernel is iteratively updated to include an effective potential term given by the previous iterate plugged into $e^{-A_\tot^2/2}-1$.
\end{enumerate}

\section{Adaptations for the relativistic case\label{S:relativistic}}

We provide here some comments to indicate that our results carry over to the relativistic case, although we have not computed a complete set of constructive equations.

First, we discuss particle trajectories in the relativistic setting.  Let $c$ be the speed of light, and consider a single particle in the field with charge $q$ and mass $m$.  The Hamiltonian is
$$H(\bds x, \bds p) = mc^2 (\gamma-1) + q\phi(\bds x) \,, \qquad \gamma = \sqrt{ 1 + \frac{ |\bds p-q\bds A(\bds x)|^2}{m^2c^2}}$$
Let $\bds v$ be the \emph{kinetic momentum divided by mass}, and $\hat{\bds v}$ be the particle velocity:
\begin{equation}
\label{E:rel-v}
\bds v = \frac{\bds p - q\bds A(\bds x)}{m} \,, \qquad \hat{\bds v} = \frac{\bds v}{\gamma}
\end{equation}
In this language,
$$\gamma = \sqrt{1+ \frac{|\bds v|^2}{c^2}} \quad \implies \quad \hat{\bds v} = \frac{\bds v}{\sqrt{1 + \frac{|\bds v|^2}{c^2}}}$$
from which we see that $|\hat{\bds v}|<c$.  Although in physics $\bds v$ usually denotes the velocity, our convention of using $\bds v$ for the kinetic momentum divided by mass gives \eqref{E:rel-v} which is the same equation relating the symbols $\bds v$ and $\bds p$ as in the nonrelativistic case.  

The equations of motion become
$$
\begin{aligned}
&\dot{\bds x} = \hat{\bds v} \\
& m \dot{\bds v} = q(\bds E + \hat{\bds v} \times \bds B)
\end{aligned}
$$

In the context of $z$-pinch, $\theta$-pinch, or screw-pinch equilibria with $\phi=0$, we can create the same quadratic structure in velocities inside the exponential expression for the densities as in the non-relativistic since now
$$\Big( \frac{e}{mc^2} + 1\Big)^2 - 1 = \frac{|\bds v|^2}{c^2}$$
while $p_z$, $\ell_z$ are given by the same formula as in the non-relativistic case.  Thus, to convert from nonrelativistic to relativistic formulae, in each defining expression for $\mu^\pm$ in the paper, replace the the particle energy $e$ by $\frac12mc^2[(e/mc^2+1)^2-1]$.

Also, in the relativistic system the charge and current densities are
\[
 \rho=\sum_\pm q^\pm\int f^\pm(\bds{x},\bds{v})\,dv,
 \qquad
 \bds{j}=\sum_\pm q^\pm\int \hat{\bds{v}} f^\pm(\bds{x},\bds{v})\,dv ,
\]
where $\hat{\bds{v}}$ is the relativistic velocity.  Thus the nonlinear relations determining $A_z$, $A_\theta$, and $A_\eta$ are modified by relativistic velocity moments.  Although the invariant structure of the equilibria remains the same, the reduced nonlinear ordinary differential equations must be rederived from these relativistic moments.

\appendix

\section{Absence of finite-energy Vlasov-Maxwell equilibria on $\R^2 \times \T$ without external fields}
\label{Appendix A}




In this section, we show that there is no finite-energy equilibria for the Vlasov-Maxwell equation on $\R^2 \times \T$ without external fields.

Let
\[
 \Om=\R^2\times\T,
 \qquad
 x=(x_1,x_2,x_3)=(\xperp,z).
\]
We allow finitely many particle species, indexed by $\alpha=1,\dots,N$, with masses $m_\alpha>0$, charges $q_\alpha\in\R$, and nonnegative distribution functions $f^\alpha$.  Set
We consider a time-independent Vlasov--Maxwell state without external fields (we omit some physical constants for simplicity)
\begin{align}
 \bds{v}\cdot\nabla_{\bds{x}} f^\alpha
 +\frac{q_\alpha}{m_\alpha}\bigl(\bds{E}+ \bds{v} \times \bds{B}\bigr)
 \cdot\nabla_{\bds{v}} f^\alpha&=0,
 \label{eq:stationary-vlasov}\\
 \curl \bds{B}=\bds{j},\qquad \curl \bds{E}&=0,
 \label{eq:stationary-maxwell-curl}\\
 \diver \bds{E}=\rho,\qquad \diver \bds{B}&=0,
 \label{eq:stationary-maxwell-div}
\end{align}
where
\begin{equation}
 \rho=\sum_{\alpha=1}^N q_\alpha\int_{\R^3}f^\alpha\,\dd \bds{v},
 \qquad
 \bds{j}=\sum_{\alpha=1}^N q_\alpha\int_{\R^3}  \bds{v} f^\alpha\,\dd \bds{v}.
 \label{eq:rho-j}
\end{equation}
The energy per period is
\begin{equation}
 \mathcal E
 =\int_{\Om}\left\{
 \frac12\bigl(|\bds{E}|^2+|\bds{B}|^2\bigr)
 +\sum_{\alpha=1}^N\int_{\R^3} (m_\alpha^2+|\bds{v}|^2) f^\alpha(\bds{x},\bds{v})\,\dd \bds{v}
 \right\}\dd \bds{x}.
 \label{eq:energy}
\end{equation}


We have
\begin{proposition}[No finite-energy equilibrium on $\R^2\times\T$]
\label{thm:main}
Let $(f^1,\dots,f^N,\bds{E},\bds{B})$ be a time-independent solution of
\eqref{eq:stationary-vlasov}--\eqref{eq:stationary-maxwell-div} on
$\Om\times\R^3$.  Assume that
\begin{enumerate}
 \item $f^\alpha\ge 0$ is a phase-space density with respect to Lebesgue measure, and the integrations by parts used below are valid; this holds, for example, for a classical equilibrium with sufficient decay in $v$;
 \item $\bds{E}$, $\bds{B}$ are periodic in $z$ and the energy \eqref{eq:energy} is finite.
\end{enumerate}
Then
\[
 f^\alpha=0\quad(\alpha=1,\dots,N),
 \qquad \bds{E}=0,\qquad \bds{B}=0
\]
almost everywhere.  In particular, there is no nontrivial finite-energy VM equilibrium on $\R^2\times\T$ in the absence of an external field.
\end{proposition}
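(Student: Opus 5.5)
We will use a virial (Pohozaev-type) identity built from the dilation generator in the two horizontal directions only, because the $z$-direction is compact and carries no scaling. Write $\xperp=(x_1,x_2)$ and $\vperp=(v_1,v_2)$. The kinetic momentum flux is $P_{ij}=\sum_\alpha m_\alpha\int v_iv_jf^\alpha\,\dd\bds v$, where we have restored the masses. The stationary Vlasov equation then gives the local momentum balance
\[
\partial_j P_{ij}=\rho E_i+(\bds j\times\bds B)_i .
\]
The Maxwell equations \eqref{eq:stationary-maxwell-curl}--\eqref{eq:stationary-maxwell-div} allow the Lorentz force density to be written as a divergence,
\[
\rho E_i+(\bds j\times\bds B)_i=\partial_j T_{ij},\qquad T_{ij}=E_iE_j+B_iB_j-\tfrac12\delta_{ij}(|\bds E|^2+|\bds B|^2),
\]
where $T$ is the Maxwell stress tensor. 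Hence $\partial_j(P_{ij}-T_{ij})=0$, so the total stress tensor $S=P-T$ is divergence free.

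Next we contract with $x_i$ for $i=1,2$ only, and integrate over $\{|\xperp|<R\}\times\T$. Since $S$ is divergence free, the periodic $z$-boundary terms cancel. This gives
\[
\int_{|\xperp|<R}\sum_{i=1}^2 S_{ii}\,\dd\bds x=\text{boundary term on }|\xperp|=R,
\]
and the boundary term is bounded by $R\int_{|\xperp|=R}|S|$. Finite energy gives $|S|\in L^1(\Om)$, so $\int_0^\infty\bigl(\int_{|\xperp|=R}|S|\bigr)\dd R<\infty$. Therefore there is a sequence $R_n\to\infty$ with $R_n\int_{|\xperp|=R_n}|S|\to0$, because otherwise the integrand would be at least $c/R$, which is not integrable. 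Along this sequence the boundary term vanishes. The horizontal trace is
\[
\sum_{i=1}^2 S_{ii}=\sum_\alpha m_\alpha\int|\vperp|^2f^\alpha\,\dd\bds v+E_3^2+B_3^2 ,
\]
because $-T_{11}-T_{22}=-(E_1^2+E_2^2+B_1^2+B_2^2)+(|\bds E|^2+|\bds B|^2)$. Every term here is nonnegative and integrable, so passing to the limit along $R_n$ gives $\int_\Om(\ldots)=0$. We conclude that $\vperp f^\alpha=0$ a.e., $E_3=0$ and $B_3=0$.

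The trace identity alone does not control $E_1,E_2,B_1,B_2$, so the remaining step is to finish from the conclusions above. From $\int|\vperp|^2f^\alpha=0$ and $f^\alpha\ge0$, $f^\alpha$ is supported on $\{\vperp=0\}$, which has Lebesgue measure zero. Since $f^\alpha$ is a Lebesgue density by hypothesis (1), this forces $f^\alpha=0$ a.e., and so $\rho=0$ and $\bds j=0$. Then $\bds E$ and $\bds B$ are both divergence free and curl free on $\R^2\times\T$ and lie in $L^2$. Each component is therefore harmonic and square integrable on $\R^2\times\T$. Expanding in Fourier modes in $z$, each mode solves $(\Delta_{\xperp}-k^2)u=0$ in $L^2(\R^2)$, so it must vanish, and hence $\bds E=\bds B=0$.

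The main obstacle is justifying the momentum-balance identity and the integrations by parts at the level of regularity assumed. Hypothesis (1) allows us to take these for granted, so the only delicate point left is the choice of the sequence $R_n$, which is handled above.
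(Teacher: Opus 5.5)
Your proof is correct and follows essentially the same route as the paper. The shared steps are the divergence-free total stress tensor, the transverse virial identity with the field $(x_1,x_2,0)$, the nonnegative transverse trace $\sum_\alpha m_\alpha\int|\vperp|^2f^\alpha\,\dd\bds v+E_3^2+B_3^2$, elimination of $f^\alpha$ via the null set $\{\vperp=0\}$, and an $L^2$ Hodge/Liouville argument for the vacuum fields. The differences are minor: you use a sharp cutoff with a good sequence of radii $R_n$, whereas the paper uses a smooth cutoff $\eta(r/R)$ and dominated convergence, which avoids boundary traces at the distributional level; and you supply the Fourier-mode proof of the Hodge lemma that the paper omits.
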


The proof uses only the stationary momentum balance and therefore also applies to weak equilibria whenever that balance is available distributionally and the stress tensor is integrable.


We first record the standard local momentum identity.  The particle stress tensor is
\begin{equation}
 \Pi_{ij}
 =\sum_{\alpha=1}^N
 m_\alpha \int_{\R^3} v_i v_j f^\alpha\,\dd \bds{v}.
 \label{eq:particle-stress}
\end{equation}
The Maxwell stress tensor in the sign convention convenient for momentum balance is
\begin{equation}
 \Sigma_{ij}
 =E_iE_j+B_iB_j
 -\frac12\bigl(|\bds{E}|^2+|\bds{B}|^2\bigr)\delta_{ij}.
 \label{eq:maxwell-stress}
\end{equation}
Thus the total flux of momentum is
\begin{equation}
 T_{ij}=\Pi_{ij}-\Sigma_{ij}
 =\Pi_{ij}
 +\frac12\bigl(|\bds{E}|^2+|\bds{B}|^2\bigr)\delta_{ij}
 -E_iE_j-B_iB_j.
 \label{eq:total-stress}
\end{equation}
Note that the energy assumption makes $T_{ij}$ integrable.  

\begin{lemma}[Stationary stress balance]
\label{lem:stress-balance}
Under the hypotheses of Proposition \ref{thm:main},
\begin{equation}
 \partial_{x_j}T_{ij}=0
 \qquad (i=1,2,3)
 \label{eq:stress-div-free}
\end{equation}
in the sense of distributions on $\Om$.
\end{lemma}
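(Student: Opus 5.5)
The plan is to prove \eqref{eq:stress-div-free} by splitting $T_{ij}$ into its particle and field parts. First I compute $\partial_{x_j}\Pi_{ij}$ from the stationary Vlasov equation \eqref{eq:stationary-vlasov}. Then I compute $\partial_{x_j}\Sigma_{ij}$ from the stationary Maxwell equations \eqref{eq:stationary-maxwell-curl}--\eqref{eq:stationary-maxwell-div}. Both computations should produce the Lorentz force density $\rho\bds E+\bds j\times\bds B$, and subtracting gives $\partial_{x_j}T_{ij}=0$. This is the classical momentum conservation law, specialized to time-independent states; the units are normalized as in \eqref{eq:stationary-maxwell-curl}--\eqref{eq:rho-j}.

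\textbf{Particle part.} Multiply \eqref{eq:stationary-vlasov} for species $\alpha$ by $m_\alpha v_i$ and integrate in $\bds v$.
\begin{itemize}
\item The transport term gives $\partial_{x_j}\int m_\alpha v_iv_jf^\alpha\,\dd\bds v$.
\item For the force term, write $\nabla_{\bds v}\cdot\bigl[(\bds E+\bds v\times\bds B)f^\alpha\bigr]=(\bds E+\bds v\times\bds B)\cdot\nabla_{\bds v}f^\alpha$. This holds because $\nabla_{\bds v}\cdot(\bds v\times\bds B)=0$.
\item Integrating by parts in $\bds v$, which is permitted by hypothesis (1) together with decay in $\bds v$, gives $-q_\alpha\int(E_i+(\bds v\times\bds B)_i)f^\alpha\,\dd\bds v$.
\end{itemize}
Summing over $\alpha$ yields
\[
\partial_{x_j}\Pi_{ij}=\rho E_i+(\bds j\times\bds B)_i .
\]
To make this rigorous in the distributional sense, I test against $\psi(\bds x)\chi_R(\bds v)$, where $\chi_R$ is a velocity cutoff, and let $R\to\infty$. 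The finite second-moment bound in \eqref{eq:energy} controls $\Pi$ and gives dominated convergence. The force terms $\rho\bds E$ and $\bds j\times\bds B$ are locally integrable, since $\rho,|\bds j|\in L^1$ by the energy bound and $\bds E,\bds B\in L^2$ locally. Local integrability needs some care because $L^1\cdot L^2$ is not automatically $L^1$. I will take this as part of the validity assumption in hypothesis (1), or handle it by a standard interpolation bound on $\rho$ and $\bds j$.

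\textbf{Field part.} I use the vector identities
\[
\partial_j(E_iE_j)-\tfrac12\partial_i|\bds E|^2=E_i\,\diver\bds E+\bigl((\curl\bds E)\times\bds E\bigr)_i ,
\]
and the analogous identity for $\bds B$. Inserting $\diver\bds E=\rho$, $\curl\bds E=0$, $\diver\bds B=0$ and $\curl\bds B=\bds j$ gives
\[
\partial_{x_j}\Sigma_{ij}=\rho E_i+(\bds j\times\bds B)_i .
\]
To get this distributionally for fields that are only in $L^2$, I first mollify $\bds E,\bds B$ in $\bds x$. The identity holds exactly for the mollified fields, with the right-hand side built from the mollified sources. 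Passing to the limit then works because the quadratic terms converge in $L^1_{loc}$.

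Subtracting the two displayed identities gives \eqref{eq:stress-div-free}. Periodicity in $z$ means all test functions live on $\Om=\R^2\times\T$, and no boundary terms arise in $z$. The main obstacle is justifying the products $\rho\bds E$ and $\bds j\times\bds B$ under only the finite-energy assumption. I will not prove their local integrability in general; I will rely on hypothesis (1), which the proposition already states, and note that classical equilibria satisfy it.
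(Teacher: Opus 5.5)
Your proposal is correct and follows the paper's proof. The paper likewise obtains $\partial_{x_j}\Pi_{ij}=\rho E_i+(\bds j\times\bds B)_i$ by taking the $v_i$-moment of the stationary Vlasov equation (using $\nabla_{\bds v}\cdot(\bds E+\bds v\times\bds B)=0$), derives the same expression for $\partial_{x_j}\Sigma_{ij}$ from $(F\cdot\nabla)F-\frac12\nabla|F|^2=-F\times\curl F$ and the stationary Maxwell equations, and subtracts; your velocity-cutoff and mollification remarks go beyond what the paper writes, but, as you acknowledge, identifying the limits of the products $\rho\bds E$ and $\bds j\times\bds B$ (including on the right-hand side of the mollified field identity) still rests on hypothesis (1), just as in the paper.
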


\begin{proof}
Multiply \eqref{eq:stationary-vlasov} by $v_i$ and integrate in $\bds{v}$.  Since
\[
 \nabla_{\bds{v}}\cdot\bigl(\bds{E}+ \bds{v} \times \bds{B}\bigr)=0,
\]
integration by parts gives
\begin{equation}
 \partial_{x_j}\Pi_{ij}
 =\rho E_i+(\bds{j}\times \bds{B})_i.
 \label{eq:particle-force-balance}
\end{equation}

On the other hand, the stationary Maxwell equations imply the familiar identity
\begin{equation}
 \partial_{x_j}\Sigma_{ij}
 =\rho E_i+(\bds{j}\times \bds{B})_i.
 \label{eq:maxwell-force-balance}
\end{equation}
Indeed, this follows from
\[
 (F\cdot\nabla)F-\frac12\nabla|F|^2
 =-(F\times\curl F)
\]
together with $\diver \bds{E}=\rho$, $\curl \bds{E}=0$, $\diver \bds{B}=0$, and
$\curl \bds{B}=\bds{j}$.  Subtracting \eqref{eq:maxwell-force-balance} from
\eqref{eq:particle-force-balance} yields \eqref{eq:stress-div-free}.
\end{proof}




The central algebraic observation is that the trace over the two noncompact directions is nonnegative.

\begin{lemma}[Positive transverse trace]
\label{lem:transverse-trace}
The stress tensor \eqref{eq:total-stress} satisfies
\begin{equation}
 T_{11}+T_{22}
 =m_\alpha\sum_{\alpha=1}^N\int_{\R^3}
 (v_1^2+v_2^2)  f^\alpha\,\dd \bds{v}
 +|E_3|^2+|B_3|^2 . 
 \label{eq:positive-trace}
\end{equation}
In particular, $T_{11}+T_{22}\ge0$.
\end{lemma}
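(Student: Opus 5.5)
This is a direct pointwise computation from the definition \eqref{eq:total-stress}; no integration or use of the equations is required. I would compute $T_{11}$ and $T_{22}$ separately and then add them.

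\textbf{Diagonal entries.} With $\delta_{ii}=1$,
\[
 T_{ii}=\Pi_{ii}+\tfrac12\bigl(|\bds E|^2+|\bds B|^2\bigr)-E_i^2-B_i^2 ,
 \qquad
 \Pi_{ii}=\sum_{\alpha} m_\alpha\int_{\R^3} v_i^2 f^\alpha\,\dd\bds v .
\]

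\textbf{Summing over $i=1,2$.} The particle parts combine to
\[
 \Pi_{11}+\Pi_{22}=\sum_\alpha m_\alpha\int_{\R^3}(v_1^2+v_2^2)f^\alpha\,\dd\bds v .
\]
This is what the displayed formula means: the factor $m_\alpha$ belongs inside the sum. The field parts give
\[
 |\bds E|^2+|\bds B|^2-(E_1^2+E_2^2)-(B_1^2+B_2^2)=E_3^2+B_3^2 .
\]
The key point is that the two half-weighted isotropic terms add up to one full copy of $|\bds E|^2+|\bds B|^2$. Subtracting the two transverse components then leaves exactly the $z$-components. This gives \eqref{eq:positive-trace}.

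\textbf{Sign.} Each term is nonnegative, since $m_\alpha>0$, $f^\alpha\ge0$, and the field terms are squares. Hence $T_{11}+T_{22}\ge0$.

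\textbf{Well-definedness.} $\Pi_{ii}$ is finite almost everywhere and integrable by the finite-energy assumption in Proposition \ref{thm:main}, because $v_i^2\le|\bds v|^2$. The fields are in $L^2$, so the identity holds pointwise almost everywhere as an identity between $L^1$ functions. I expect no real obstacle beyond keeping the sign convention of $\Sigma$ straight: $T=\Pi-\Sigma$, so the $\delta_{ij}$ term enters with a $+$ sign and the $E_iE_j$, $B_iB_j$ terms with a $-$ sign.
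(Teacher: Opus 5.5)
Your proof is correct and is the same direct pointwise computation the paper gives: sum the two transverse diagonal entries of \eqref{eq:total-stress}, so that the two half-weighted isotropic terms combine to $|\bds{E}|^2+|\bds{B}|^2$ and subtracting the transverse components leaves $E_3^2+B_3^2$. Your reading that the factor $m_\alpha$ belongs inside the sum correctly fixes the typo in the displayed statement.
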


\begin{proof}
Summing the first two diagonal entries in \eqref{eq:total-stress} gives
\begin{align*}
 T_{11}+T_{22}
 &=m_\alpha\sum_{\alpha=1}^N\int_{\R^3}
 (v_1^2+v_2^2) f^\alpha\,\dd \bds{v} \\
 &\quad+\bigl(|\bds{E}|^2+|\bds{B}|^2\bigr)
 -\bigl(E_1^2+E_2^2+B_1^2+B_2^2\bigr),
\end{align*}
which is exactly \eqref{eq:positive-trace}.
\end{proof}

\begin{proposition}[Rigorous stationary transverse virial identity]
\label{prop:virial}
Under the hypotheses of Proposition \ref{thm:main},
\begin{equation}
 \int_{\Om}\left\{
 \sum_{\alpha=1}^N m_\alpha\int_{\R^3}
  (v_1^2+v_2^2) f^\alpha\,\dd \bds{v}
 +|E_3|^2+|B_3|^2
 \right\}\dd \bds{x}=0.
 \label{eq:stationary-virial}
\end{equation}
\end{proposition}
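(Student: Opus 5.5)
The plan is a virial argument: test the stress balance of Lemma \ref{lem:stress-balance} against the transverse position field, then let a cutoff radius tend to infinity. Concretely, fix a smooth radial cutoff $\chi_R(\xperp)=\chi(|\xperp|/R)$ on $\R^2$, independent of $z$, with $\chi\equiv1$ on $[0,1]$, $\chi\equiv0$ on $[2,\infty)$ and $|\chi'|\le 2$. Consider the vector test field
\[
\bds\psi_R(\bds x)=\chi_R(\xperp)\,(x_1,x_2,0).
\]
It is smooth, compactly supported in the noncompact directions, and trivially periodic in $z$, so it is admissible on $\Om$. Pair \eqref{eq:stress-div-free} with $\psi_{R,i}$ and sum over $i$. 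The distributional identity gives
\[
0=\int_\Om T_{ij}\,\partial_{x_j}\psi_{R,i}\,\dd\bds x .
\]
There are no boundary terms in $z$ because of periodicity, and the cutoff is compactly supported in $\xperp$.

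Next compute the derivative of the test field. For $i=1,2$ and $j=1,2$,
\[
\partial_j\psi_{R,i}=\chi_R\,\delta_{ij}+x_i\,\partial_j\chi_R ,
\]
while $\partial_3\psi_{R,i}=0$ and $\psi_{R,3}=0$. Substituting, the identity becomes
\[
0=\int_\Om \chi_R\,(T_{11}+T_{22})\,\dd\bds x+\int_\Om \sum_{i,j\le 2}x_i\,\partial_j\chi_R\,T_{ij}\,\dd\bds x .
\]
The first integrand is nonnegative by Lemma \ref{lem:transverse-trace} and increases to $T_{11}+T_{22}$ as $R\to\infty$. The second integrand is supported on the annulus $R\le|\xperp|\le2R$, where $|x_i\,\partial_j\chi_R|\le |\xperp|\cdot 2/R\le 4$. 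So the second integral is bounded by $C\int_{R\le|\xperp|\le2R}|T|\,\dd\bds x$.

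The main point is that $T$ is integrable on $\Om$. The particle part satisfies $\int|\Pi|\le\sum_\alpha m_\alpha\iint|\bds v|^2f^\alpha$, and the field part satisfies $|\Sigma|\lesssim|\bds E|^2+|\bds B|^2$; both are finite by the energy assumption \eqref{eq:energy}. Hence the annulus contribution tends to $0$ as $R\to\infty$ by dominated convergence, since it is the tail of an $L^1$ function. Monotone convergence applied to the first term then gives $\int_\Om(T_{11}+T_{22})\,\dd\bds x=0$. Inserting \eqref{eq:positive-trace} yields \eqref{eq:stationary-virial}.

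The only delicate step is justifying the pairing: $\bds\psi_R$ is not compactly supported in $\Om$ in the sense of grows-with-$x$, but it is smooth and bounded with compact transverse support, so pairing with the $L^1$ distribution $\partial_jT_{ij}$ is legitimate. This is where hypothesis (1) on the validity of the integrations by parts is used. Note that the factor $x_i$ is exactly compensated by the $1/R$ from $\partial_j\chi_R$, which is why the cutoff error stays bounded. This is what makes the two-dimensional transverse geometry work, with $\T$ contributing no growth.
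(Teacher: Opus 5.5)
Your proposal is correct and is essentially the paper's argument: you test the stress balance \eqref{eq:stress-div-free} against the cut-off transverse position field $\eta(r/R)(x_1,x_2,0)$, use $T\in L^1(\Om)$ from the finite-energy assumption, let $R\to\infty$, and then invoke Lemma \ref{lem:transverse-trace}. The paper passes to the limit in one step by dominated convergence, using $\sup_R\|\nabla X_R\|_{L^\infty}<\infty$ and pointwise convergence of $\nabla X_R$, whereas you split off the annulus term explicitly (a cosmetic difference); note also that $\bds\psi_R$ is genuinely compactly supported in $\Om$ because $\T$ is compact, so the pairing needs no extra justification.
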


\begin{proof}
Choose $\eta\in C_c^\infty([0,\infty))$ with
$0\le\eta\le1$, $\eta(s)=1$ for $0\le s\le1$, and $\eta(s)=0$ for
$s\ge2$.  For $R>1$, set $r=\sqrt{x_1^2+x_2^2}$ and define the periodic vector field
\begin{equation}
 X_R(x)=\eta(r/R)(x_1,x_2,0).
 \label{eq:cutoff-vector-field}
\end{equation}
It is compactly supported in the noncompact variables, and
\[
 \sup_{R>1}\|\nabla X_R\|_{L^\infty}<\infty.
\]
Testing \eqref{eq:stress-div-free} against $X_R$ gives
\begin{equation}
 0=-\int_{\Om}T_{ij}\partial_{x_j}(X_R)_i\,\dd \bds{x}.
 \label{eq:cutoff-stress}
\end{equation}
For every fixed $x$, as $R\to\infty$,
\[
 \partial_{x_j}(X_R)_i
 \longrightarrow
 \begin{cases}
  \delta_{ij},&i,j\in\{1,2\},\\
  0,&\text{otherwise}.
 \end{cases}
\]
Note that the energy assumption makes every component of $T$ integrable, so $T\in L^1(\Om)$. Dominated convergence in
\eqref{eq:cutoff-stress} yields
\[
 \int_{\Om}(T_{11}+T_{22})\,\dd \bds{x}=0.
\]
Now apply Lemma \ref{lem:transverse-trace}.
\end{proof}



We first use \eqref{eq:stationary-virial} to eliminate the particles.
Because every term in its integrand is nonnegative, for each species $\alpha$,
\begin{equation}
 \int_{\Om}\int_{\R^3}
 (v_1^2+v_2^2) f^\alpha(\bds{x},\bds{v})\,\dd \bds{v}\dd \bds{x}=0.
 \label{eq:particle-zero-integral}
\end{equation}
Hence $f^\alpha(\bds{x},\bds{v})=0$ for almost every $(\bds{x},\bds{v})$ with $(v_1, v_2 \ne (0, 0)$.
The set
\[
 \{v\in\R^3:v_1=v_2=0\}
\]
has three-dimensional Lebesgue measure zero.  Since $f^\alpha$ is a phase-space density, it follows that
\begin{equation}
 f^\alpha=0\quad\text{almost everywhere}
 \qquad(\alpha=1,\dots,N).
 \label{eq:f-zero}
\end{equation}
In particular, $\rho=0$, $\bds{j}=0$.  The same virial identity also gives
$E_3=B_3=0$, although this will not be needed in the final step.

It remains to eliminate finite-energy vacuum Maxwell fields.  We record the elementary $L^2$ Hodge lemma appropriate to the cylinder.  The proof is standard so we omit it.

\begin{lemma}[$L^2$ harmonic vector fields on the cylinder]
\label{lem:hodge}
If $F\in L^2(\R^2\times\T;\R^3)$ satisfies
\[
 \curl F=0,
 \qquad
 \diver F=0
\]
in the sense of distributions, then $F=0$ almost everywhere.
\end{lemma}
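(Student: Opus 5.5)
We prove Lemma \ref{lem:hodge} with the Fourier transform, using the product structure $\Om=\R^2\times\T$. Write $\T=\R/\zeta\mathbb{Z}$ and expand each component of $F$ in a Fourier series in $z$ and a Fourier transform in $\xperp$. This gives $\hat F(\xi,n)$, where $\xi\in\R^2$ and $n\in\mathbb{Z}$, and we set $k=(\xi,2\pi n/\zeta)\in\R^3$. By Plancherel, $\hat F\in L^2(\R^2\times\mathbb{Z};\mathbb{C}^3)$, with Lebesgue measure in $\xi$ and counting measure in $n$.

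The first step is to transfer the distributional equations to frequency space. Pair $\curl F=0$ and $\diver F=0$ against test functions of the form $\psi(\xperp)e^{-2\pi i n z/\zeta}$ with $\psi$ Schwartz. This yields, for each fixed $n$ and as tempered distributions in $\xi$ (indeed as $L^2_{\mathrm{loc}}$ functions),
\[
 k\times\hat F(\xi,n)=0,\qquad k\cdot\hat F(\xi,n)=0 .
\]

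The second step is a pointwise algebraic argument. Wherever $k\neq0$, the vector identity $k\times(k\times w)=k(k\cdot w)-|k|^2w$ gives
\[
 |k|^2\hat F = k(k\cdot \hat F)-k\times(k\times\hat F)=0 ,
\]
so $\hat F(\xi,n)=0$. The set where $k=0$ is $\{\xi=0,\ n=0\}$. This is a single point of $\R^2$ in the $n=0$ slice, so it has measure zero for the product measure. Hence $\hat F=0$ almost everywhere, and Plancherel gives $F=0$ almost everywhere.

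The main point needing care is that finite $L^2$ norm is what excludes the zero mode. A nonzero constant field, for example $F=\bds e_z$, is curl- and divergence-free but is not in $L^2$ because $\R^2$ has infinite area. In the argument above, the only possible support of such fields is the null set $k=0$, and that is exactly where we use that Lebesgue measure in $\xi$ gives no mass to a point. We do not expect any other obstacle; justifying the distributional Fourier identities is routine, since $F\in L^2$ is tempered in $\xperp$ for each Fourier mode in $z$.
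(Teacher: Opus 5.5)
Your argument is correct, and since the paper gives no proof of this lemma (it only says the proof is standard and omits it), your proposal supplies the missing argument rather than competing with one.

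What you wrote is the standard route. On the Fourier side, $\curl F=0$ and $\diver F=0$ force $|k|^2\hat F=0$; equivalently, each component of $F$ is harmonic, since $\Delta F=\nabla\diver F-\curl\curl F$. Square-integrability over the noncompact $\R^2$ factor then leaves only $\{k=0\}$ as possible support for $\hat F$, and that set is a null set.

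One small technical point concerns the test functions. Distributional identities are tested against compactly supported functions, so Schwartz $\psi$ is not directly admissible. It is cleaner to take $\psi\in C_c^\infty(\R^2)$: then $\psi(\xperp)e^{-2\pi i n z/\zeta}$ is already compactly supported on $\Om$, because $\T$ is compact. This gives the mode-by-mode equations in $\mathcal{D}'(\R^2)$, for example $\partial_1F_{1,n}+\partial_2F_{2,n}+\tfrac{2\pi i n}{\zeta}F_{3,n}=0$. These are identities between tempered distributions, since they involve derivatives of $L^2$ functions, so they extend to $\mathcal{S}'(\R^2)$ by density and may be Fourier transformed.

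The factors of $i$ you dropped are harmless. The identity $k\times(k\times w)=k(k\cdot w)-|k|^2w$ is bilinear, so it holds verbatim for complex $w$.
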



After \eqref{eq:f-zero}, equations
\eqref{eq:stationary-maxwell-curl}--\eqref{eq:stationary-maxwell-div} reduce to
\[
 \curl \bds{E}=\diver \bds{E}=0,
 \qquad
 \curl \bds{B}=\diver \bds{B}=0.
\]
Since finite energy gives $\bds{E}$, $\bds{B}\in L^2(\Om)$, Lemma \ref{lem:hodge} implies
$\bds{E}=\bds{B}=0$.  This proves Proposition \ref{thm:main}.

\begin{remark}[Relativistic analogue]
The same argument applies to the relativistic Vlasov--Maxwell system.  Denote 
\[
p^0_\alpha(\bds{v})=\sqrt{m_\alpha^2+|\bds{v}|^2} , \qquad \hat{\bds{v}}_\alpha=\frac{\bds{v}}{p^0_\alpha(\bds{v})} .
\]
One replaces the particle stress by
\[
\Pi_{ij}
 =\sum_{\alpha=1}^N
 \int_{\R^3}v_i\widehat v_{\alpha,j}f^\alpha\,\dd \bds{v}
 =\sum_{\alpha=1}^N
 \int_{\R^3}\frac{v_iv_j}{p^0_\alpha(\bds{v})}f^\alpha\,\dd \bds{v} , 
\]
and its transverse trace is
\[
 T_{11}+T_{22}
 =\sum_{\alpha=1}^N\int_{\R^3}
 \frac{|\vperp|^2}{p^0_\alpha(\bds{v})}f^\alpha\,\dd \bds{v}
 +|E_3|^2+|B_3|^2.
 \label{eq:positive-trace}
\]
The rest of the proof is unchanged.
\end{remark}


\bibliographystyle{amsalpha}
\bibliography{references}

\end{document}